
\documentclass[12pt,reqno]{amsart}
\usepackage{amsmath}
\usepackage{amssymb}
\usepackage[left=3cm,top=3cm,right=3cm,bottom=3cm]{geometry}
\usepackage{epsfig}
\usepackage[normalem]{ulem}
\usepackage[usenames,dvipsnames]{color}
\usepackage{todonotes}
\usepackage[colorlinks, citecolor=blue, linkcolor=blue, urlcolor=blue]{hyperref}
\usepackage{soul}

\usepackage{tikz}
\usetikzlibrary{backgrounds}
\usetikzlibrary{intersections}
\usepackage{floatrow}

\newtheorem{theorem}{Theorem}[section]
\newtheorem{lemma}[theorem]{Lemma}

\newcommand\eps{\varepsilon}
\newcommand{\E}{\mathbb E}

\newcommand{\Prob}{\mathbb{P}}
\newcommand{\Bin}{\mathrm{Bin}}

\newcommand{\Nn}{{\mathbb N}}
\newcommand{\Zz}{{\mathbb Z}}
\newcommand{\Rr}{{\mathbb R}}

\title{Broadcasting on Paths and Cycles}
\author[R.~Huq \and P.~Pra\l{}at]
{Reaz Huq \and Pawe\l{} Pra\l{}at}

\address{Ryerson University, Toronto, Canada}
\email{reaz.huq@ryerson.ca, pralat@ryerson.ca}

\date{\today}

\begin{document}

\maketitle

\begin{abstract}
Consider the following broadcasting process run on a connected graph $G=(V,E)$. Suppose that $k \ge 2$ agents start on vertices selected from $V$ uniformly and independently at random. One of the agents has a message that she wants to communicate to the other agents. All agents perform independent random walks on $G$, with the message being passed when an agent that knows the message meets an agent that does not know the message. The broadcasting time $\xi(G,k)$ is the time it takes to spread the message to all agents. We provide tight bounds for $\xi(P_n,k)$ and $\xi(C_n,k)$ that hold asymptotically almost surely for the whole range of the parameter~$k$. 
\end{abstract}

\section{Introduction}

In this paper, we investigate the problem of broadcasting messages between agents that randomly move on a connected graph $G=(V,E)$. The assumption is that $k \ge 2$ agents start the process at random locations on the graph and then perform a random walk along its vertices. One agent, selected in advance, initially possesses some information. If at some point during the process two agents meet at some vertex or pass each other at some edge and only one of them possesses the information, it is passed along to the other agent. The broadcasting time $\xi(G,k)$ is the time it takes to spread the message to all agents. (Formal definition will be provided in Section~\ref{sec:problem}.)

\medskip

The performance of a random walk in a network is a fundamental process that has found applications in many areas of computer science. Since this paper contains theoretical results, we will focus on prior results of related processes that were investigated rigorously and in a theoretical context. As this is still a very broad topic, we only scratch the surface and focus on multiple random walks performed simultaneously (which has many applications in distributed computing, such as sampling). For more on other directions, we direct the reader to one of the many books on Markov chains; see, for example~\cite{book_Yuval}.

Suppose there are $k \ge 2$ particles, each making a simple random walk on a graph $G$. Even if the particles are oblivious of each other, it is important and non-trivial to estimate the (vertex) cover time, an extensively studied graph parameter that is defined as the expected time required for the process to visit every vertex of $G$. The first paper~\cite{ref1} on this problem was concerned with the walk starting on the worst case vertices and subsequent papers~\cite{ref2,ref11} dealt specifically with starting positions selected randomly from the stationary distribution. Questions become more interesting (and difficult) once we allow particles to interact once they meet. We assume that interaction occurs only when meeting at a vertex, and that the random walks made by the particles are otherwise independent. There are at least four interesting variants of this process:
\begin{itemize}
    \item \emph{Predator-Prey}: estimate the expected time-to-extinction of the prey particles under the assumption that $k$ predator and $\ell$ prey particles walk independently; predators eat prey particles upon meeting at a vertex.
    \item \emph{Coalescing particles}: estimate the expected time to coalesce to a single particle under the assumption that $k$ particles walk independently and coalesce upon meeting at a vertex.
    \item \emph{Annihilating particles}: estimate the expected time-to-extinction of all particles under the assumption that $k = 2\ell$ particles walk independently and destroy each other (pairwise) upon meeting at a vertex.
    \item \emph{Talkative particles}: estimate the expected time to broadcast a message---this is exactly the problem that we are concerned with in this paper.
\end{itemize}

All of these variants have been studied for random $d$-regular graphs $\mathcal{G}_{n,d}$~\cite{Alan_d-reg}. In particular, if $d \ge 3$ is a fixed constant and $k \le n^{\eps}$ for a sufficiently small constant $\eps>0$, then asymptotically almost surely (see the next section for a definition and a notation used)
$$
\xi(\mathcal{G}_{n,d}, k) \sim \frac {2 H_{k-1}}{k} \cdot \frac {d-1}{d-2} \cdot n.
$$
Moreover, in a recent paper the authors of this paper (along with three co-authors) provided a complete characterization of $\xi(K_n,k)$ for the whole range of the parameter~$k$. Interestingly, $\xi(K_n,k)$ is well concentrated around $2n \ln k / k$ for a wide range of possible values of $k$, but the behaviour changes when $k$ is very large, namely, when $k$ is linear in $n$~\cite{cliques}.  These are the only theoretical results on the broadcasting time that we are aware of. However, the \emph{frog model}, a well-known and well-studied epidemic model, is somewhat related to our problem~\cite{Itai}. There are a few differences between the two models. For example, in the frog model the number of agents that start on a given vertex is an independent Poisson random variable, some agents do not perform walks, and agents have a given lifespan. Though variations of the frog model have been studied (for example, in~\cite{Popov}, where every agent performs a random walk), we have seen none which are direct analogues of the process we study in this paper.

\medskip

On the other hand, the variant of coalescing particles is very well-studied, mainly because of its surprising connection to the voter model~\cite{voter2,voter6}. The state of the process at a given time $t$ is described by a function $\mu_t : V(G) \to O$, where $V(G)$ is the vertex set of a graph $G$ and $O$ is a given set of possible opinions. Each vertex $v\in V(G)$ ``wakes up'' at rate 1. When it wakes up at a time $t>0$, $v$ chooses one of its neighbours $w$ uniformly at random and updates its value $\mu_t(v)$ to the opinion of $w$; all other opinions remain the same. A classical duality result (see, for example,~\cite{voter2,voter6}) directly relates the state of the process at a given time to a system of coalescing random walks on $G$ moving backwards in time. As we already mentioned, there are many interesting results on the coalescing time. Let us only mention a beautiful conjecture posed by Aldous and Fill in the mid-nineties (Open problem 13, Chapter~14 of~\cite{voter2}). They conjectured an upper bound for the mean coalescent time in terms of the mean hitting time of a single random walk. The conjecture was proved in~\cite{Roberto12,Roberto13}.

\medskip

Let us now briefly discuss the following well-known and well-studied rumour spreading protocols: \emph{Push} and \emph{Push~\&~Pull}. Suppose that one vertex in a network is aware of a piece of information, the ``rumour'', and wants to spread it to all vertices. In each round of the \emph{Push} protocol, every informed vertex contacts a random neighbour and sends the rumour to it (``pushes'' the rumour). In \emph{Push~\&~Pull}, uninformed vertices can also contact a random neighbour to get the rumour if the neighbour knows it (``pulls'' the rumour).

There is a long sequence of interesting and important papers studying the runtime of \emph{Push} on the complete graph. The first paper considering this protocol is~\cite{Push10} but more precise bounds were provided in~\cite{Push17} and then in~\cite{Push5}, in which it was shown that the process can essentially be stochastically bounded (from both sides) by coupon collector-type problems. A very recent paper~\cite{Latin} both determines the limiting distribution and explains why it is a difficult problem: the runtime, scaled appropriately by $(\log_2 n + \ln n)$, has no limiting distribution; instead, it exhibits double-oscillatory behaviour. \emph{Push} has been extensively studied on several other graph classes besides complete graphs. 

The \emph{Push~\&~Pull} protocol has an equally long sequence of interesting papers studying it. The synchronous version of the protocol (as described above) was introduced in~\cite{Pull5} and popularized in~\cite{Pull23}. However, such synchronized models (that is, models in which all vertices take action simultaneously at discrete time steps) are not plausible for many applications, including real-world social networks. As a result, an asynchronous version of the model with a continuous timeline was introduced in~\cite{Pull4}. In this variant, each vertex has its own independent clock that rings at the times of a rate 1 Poisson process with the protocol specifying what a vertex has to do when its own clock rings. The first theoretical relationships between the spread times in the two variants was provided in~\cite{Wormald}.

\medskip

In this paper, we focus on paths and cycles. We show that the behaviour of $\xi(G, k)$ is similar for both families of graphs and is approximately equal to $n^2 / k$ (that is, up to poly-log factors), provided that $k \le n$. (See Theorem~\ref{thm:main} for the precise statement.) The paper is structured as follows. In the next section, we formally define our problem, introduce asymptotic notation, and state the main result. The whole of Section~\ref{sec:proofs} is devoted to proving the main result.

\section{Formulation of the Problem and the Main Result}\label{sec:problem}

In this section, we formally define the process we aim to analyze (Subsection~\ref{sec:proglem}). Though we define it for any connected graph, in this paper we focus on paths and cycles. As our results are asymptotic in nature, we need to introduce the asymptotic notation that is used throughout the entire paper (Subsection~\ref{sec:asymptotic}). Finally, we state the main result that combines all ranges for the number of agents involved (Subsection~\ref{sec:main_result}).

\subsection{Problem}\label{sec:proglem}

Suppose that we are given a connected graph $G=(V,E)$ on $n=|V|$ vertices, and let $k \ge 2$ be any natural number. There are $k$ agents, one of which is green with the rest being white. The process starts at round $t=0$ with agents located randomly on vertices of $G$; that is, each agent starts at any vertex $v \in V$ with probability $1/n$, independently of other agents and independently of her colour. Each agent synchronously performs an independent random walk, regardless of whether she is green or white. In other words, an agent occupying vertex $v \in V$ moves to any neighbour of $v$ with probability equal to $1/\deg(v)$. A white agent becomes green when she meets a green agent at some \emph{round} $t \ge 0$. In particular, all agents that start at the same vertex as the initial green agent become green at the very beginning. For most graphs it does not make a substantial difference but, for example, for bipartite graphs this definition has a flaw: if $G$ is a bipartite graph with parts $X, Y$, a given white agent and the green agent will not meet if they do not start on the same partition of the graph. In order to solve this potential issue, we will allow a white agent to become green if they move from $u$ to $v$ at the same round that a green agent moves from $v$ to $u$. Let $\xi = \xi(G, k)$ be the time it takes for all agents to become green. (Note that $\xi$ is a random variable even when $G$ is a deterministic graph.)

\medskip

We say that the process is at \emph{phase} $\ell$ ($1 \le \ell \le k$) if there are $\ell$ green agents (and so $k-\ell$ white agents). The first phase is usually phase $1$, unless some white agents start at the same vertex as the green agent but this is rare if $k$ is small. Clearly, the process always moves from a smaller phase to a larger phase. If $k$ is small, then typically it takes some number of rounds for the process to move to another phase though some phases may be skipped. If $k$ is large, then skipping phases is quite common. The process ends at the end of round $\xi$, when we are about to move to phase $k$.

\subsection{Asymptotic Notation}\label{sec:asymptotic}

Our results are asymptotic in nature, that is, we will assume that $n\to\infty$. Formally, we consider a sequence of graphs $G_n=(V_n,E_n)$ (paths and cycles on $n$ vertices) and $k=k(n)$ may be a function of $n$ that tends to infinity as $n\to \infty$. We are interested in events that hold \emph{asymptotically almost surely} (\emph{a.a.s.}), that is, events that hold with probability tending to 1 as $n\to \infty$.

Given two functions $f=f(n)$ and $g=g(n)$, we will write $f(n)=O(g(n))$ if there exists an absolute constant $c \in \Rr_+$ such that $|f(n)| \leq c|g(n)|$ for all $n$, $f(n)=\Omega(g(n))$ if $g(n)=O(f(n))$, $f(n)=\Theta(g(n))$ if $f(n)=O(g(n))$ and $f(n)=\Omega(g(n))$, and we write $f(n)=o(g(n))$ or $f(n) \ll g(n)$ if $\lim_{n\to\infty} f(n)/g(n)=0$. In addition, we write $f(n) \gg g(n)$ if $g(n)=o(f(n))$ and we write $f(n) \sim g(n)$ if $f(n)=(1+o(1))g(n)$, that is, $\lim_{n\to\infty} f(n)/g(n)=1$.

Finally, for any $\ell \in \Nn$ we will use $[\ell]$ to denote the set of $\ell$ smallest natural numbers, that is, $[\ell] := \{1, 2, \ldots, \ell\}$.

\subsection{Main Result}\label{sec:main_result}

Let us summarize the main results for paths and cycles in one theorem. More detailed and stronger statements can be found in the next section.

\begin{theorem}\label{thm:main}
Let $\omega=\omega(n)$ be any function that tends to infinity as $n \to \infty$. 
Let $G=P_n$ (a path on $n$ vertices) or $G=C_n$ (a cycle on $n$ vertices).
Depending on the parameter $k=k(n)$, the following properties hold a.a.s.:
\begin{itemize}
\item [(a)] If $k \le \omega$, then 
$$
\frac {n^2}{\omega^3 \ln \omega} \le \xi(G,k) \le n^2 \omega.
$$  
\item [(b)] If $\omega < k \le \omega \ln n$, then 
$$
\frac {n^2}{\omega k^2 \ln k} \le \xi(G,k) \le n^2 \omega.
$$
\item [(c)] If $\omega \ln n < k \le n / \ln^2 n$, then 
$$
\Omega \left( \frac {n^2}{k (\ln k) (\ln n)} \right) = \xi(G,k) = O \left(  \frac {n^2 \ln n}{k} \right).
$$
\item [(d)] If $n / \ln^2 n < k \le 50 n \ln n$, then 
$$
\Omega \left( n \right) = \xi(G,k) = O \left(  \frac {n^2 \ln n}{k} \right).
$$
\item [(e)] If $k = k(n) \ge 50 n \ln n$, then $\xi(G,k) = \Theta(n)$. 
\end{itemize}
In any case, if $k = n^{x+o(1)}$ for some $x \in [0,1]$, then a.a.s.\ 
$$
\xi(G,k) = \frac {n^{2+o(1)}}{k} = n^{2-x+o(1)}.
$$
\end{theorem}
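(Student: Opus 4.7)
My plan is to establish the bounds in parts (a)--(e) separately and then derive the unified consequence by elementary case analysis. For the latter, given $k = n^{x+o(1)}$ with $x \in [0,1]$, I would pick $\omega$ to be any slowly growing function (say $\omega = \log\log n$) and verify in each applicable range that both the lower and upper bounds simplify to $n^{2-x+o(1)}$. For $x=0$ regime (a) or (b) applies, for $x \in (0,1)$ regime (c) applies, and for $x=1$ one of (c), (d), (e) applies; in every case the polylogarithmic factors are absorbed into $n^{o(1)}$.

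For the upper bounds, I would exploit that on a path or a cycle, the maximum displacement of any of $k$ independent random walks after $t$ steps is concentrated at $\Theta(\sqrt{t\log k})$. Placing $k$ agents uniformly at random on $V$ yields consecutive gaps concentrated around $n/k$. In regimes (a) and (b) I would run the process for $t = n^2 \omega$ steps and show that a.a.s.\ every agent's range covers $\Omega(n)$ vertices, which suffices to force a meeting with some green agent. For regime (c) I would pick $t = \Theta(n^2 \ln n / k)$, so that each agent's range covers many typical gaps; a careful argument is needed here because one must track the evolution of meetings across phases rather than merely bounding a single pairwise meeting. In (d) essentially the same approach applies, but the typical gap is already $O(\ln^2 n)$. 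In (e), with $k \ge 50 n \ln n$, a standard coupon-collector calculation shows that a.a.s.\ every vertex starts with at least one agent, so the infection front advances at a linear rate and reaches every vertex in $\Theta(n)$ rounds.

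For the lower bounds, I would rely on first-passage estimates for random walks on the line. In (a)--(b) the starting gaps are a.a.s.\ $\Theta(n/k)$, so any two agents need $\Omega((n/k)^2)$ steps to first meet; showing that the slowest green-white pair takes at least the claimed time contributes the additional logarithmic factors. In (c)--(d) I would show that, with probability bounded below, at least one white agent begins at distance $\Omega(n/k)$ from every green and takes at least $\Omega(n^2/(k \ln k \ln n))$ steps to meet one, using that the tail of the first-meeting time of two random walks on $\Zz$ starting distance $d$ apart decays like $d/\sqrt{t}$. In (d)--(e) the bound $\Omega(n)$ is essentially the diameter of $G$, hence the minimum time for any agent to traverse it.

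The main obstacle, I expect, is the upper bound in regime (c). A naive pairwise union bound over the $\Theta(k^2)$ possible meetings is too lossy, and one must couple the joint motion of pairs of agents to a single random walk on an appropriate quotient graph to leverage optimal first-passage estimates. On top of this, the phase structure introduces subtle dependencies: once additional agents turn green, the typical gap between a white and the nearest green shrinks, so the proof must either argue about the first phase in isolation (and show it already takes enough time) or sum contributions across phases without excessive loss. Matching the lower and upper bounds tightly (up to polylogs) in this intermediate regime is where I would expect the bulk of the technical work to lie.
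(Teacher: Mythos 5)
Your high-level architecture (per-regime theorems plus an elementary case analysis for the $n^{2-x+o(1)}$ consequence) matches the paper, and your treatments of the easy regimes are in the right spirit: the small-$k$ lower bound via displacement/first-meeting estimates, the diameter lower bound in (d)--(e), and the linear front in (e) (though there you need every vertex to carry $\Omega(\ln n)$ agents at \emph{every} round $t\le n$, not just at time $0$; the paper gets this from reversibility of the walk, i.e.\ $P_t(i,j)\ge \tfrac12 P_t(j,i)$, so that each vertex is occupied by a given agent with probability $\ge 1/(2n)$ at all times). However, there are two genuine gaps in the medium regime, which is exactly where you locate the difficulty but for which you do not supply the working idea. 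For the upper bound in (c)--(d), the paper does not track phases or couple pairs on a quotient graph at all. It partitions the path into $b=\Theta(k/\ln n)$ blocks of length $\Theta(n\ln n/k)$, shows each block contains between $100\ln n$ and $1800\ln n$ agents at every round $t\le n^2$, gets one entirely green internal block after $o(n^2\ln n/k)$ rounds (by separating the first green--white pair by a full block), and then runs a front-propagation argument: in $t=\Theta\bigl((n\ln n/k)^2\bigr)$ rounds each of the $\ge 100\ln n$ green agents in a green block crosses into the adjacent block with probability $\ge 1/4$, so the front fails to advance with probability $(3/4)^{100\ln n}=o(n^{-1})$; summing $t\cdot b$ gives $O(n^2\ln n/k)$. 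The phase-structure dependencies you worry about never arise because greenness is tracked spatially (whole blocks), not agent by agent.

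The second gap is the lower bound in regime (c). Your proposal---one white agent at distance $\Omega(n/k)$ from every green agent, combined with the $d/\sqrt{t}$ meeting-time tail---only yields $\xi=\Omega\bigl((n/k)^2/\ln k\bigr)$, which is smaller than the claimed $\Omega\bigl(n^2/(k\ln k\ln n)\bigr)$ by a factor of order $k/\ln n$ throughout regime (c). The paper's argument is again sequential over blocks: the upper bound of $O(\ln n)$ agents per block of length $\Theta(n\ln n/k)$ forces a gap of width $\ge n/(4k)$ between consecutive agents inside every block, and the ``leading'' green agent must cross $\Omega(k/\ln n)$ such gaps one after another, each crossing costing $\Omega\bigl(n^2/(k^2\ln k)\bigr)$ rounds a.a.s.\ by Hoeffding--Azuma. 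Without this accumulation over $\Theta(k/\ln n)$ successive gap crossings, the single-pair estimate cannot reach the stated bound. (A smaller omission: the paper handles the two graph families by a coupling showing $\xi(P_n,k)\le \xi(C_{2(n-1)},k)$ for $k=o(n)$, so that upper bounds need only be proved for cycles and lower bounds for paths; your sketch does not address how the bounds transfer between $P_n$ and $C_n$.)
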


Parts~(a) and~(b) follow from Theorems~\ref{thm:upper_bound_small_k} and~\ref{thm:small_k_upperbound}.
Parts~(c) and~(d) follow from Theorems~\ref{thm:medium_k_lowerbound} and~\ref{thm:medium_k_upperbound}.
Finally, part~(e) follows from Theorem~\ref{thm:large_k}.
The arguments for paths $P_n$ and cycles $C_n$ are almost the same. Because of the boundary effect, the argument for paths is usually slightly more challenging. 
In order to avoid reproving theorems for the two classes, we provide a coupling that shows that $\xi(P_n,k) \le \xi(C_{2(n-1)},k)$, provided that $k=o(n)$---see Lemma~\ref{lem:coupling}. It is a simple but interesting and useful observation as it proves that to establish the asymptotic behaviour for both classes, one only needs to prove upper bounds for cycles and lower bounds for paths. Having said that, we have to admit that we more often than not make exceptions to this rule. For example, the argument for large values of $k$ (Section~\ref{pathslargek}) is the same for both classes of graphs so there is no need for coupling which does not apply to this range of values of $k$ anyway. The proofs for the corresponding upper bounds for small values of $k$ (Section~\ref{sec:small_values_of_k}) are substantially different for paths and cycles, so we decided to include both arguments. (Of course, the result for paths is alternatively implied by the coupling.) Finally, since part of the range of medium values of the parameter $k$ (Section~\ref{sec:medium_values_of_k}) is not covered by the coupling, we decided to present an argument for paths that is simpler and only mention straightforward adjustments to cycles.

\section{Proofs}\label{sec:proofs}

This whole section is devoted to proving Theorem~\ref{thm:main}. We investigate the process running on $P_n$, the path on $n$ vertices, and $C_n$, the cycle on $n$ vertices. It will be convenient to label vertices of $P_n$ as follows: $V(P_n) = [n] := \{1, 2, \ldots, n\}$ and $E(P_n) = \{ i (i+1) : i \in [n-1]\}$. Similarly, $V(C_n) = [n] := \{1, 2, \ldots, n\}$ and $E(C_n) = \{ i (i+1) : i \in [n-1]\} \cup \{1n\}$.

The proofs require unique approaches depending on the number of agents involved (parameter $k = k(n)$). Thus, we will deal with each sub-range of $k$ independently. However, before we start, let us state some concentration inequalities that we will use often, and introduce the coupling between the processes run on paths and cycles.

\subsection{Chernoff inequality}

Throughout the paper, we will be using the following concentration inequality: let $X \in \textrm{Bin}(n,p)$ be a random variable characterized as a binomial distribution with parameters $n$ and $p$. Then, a consequence of \emph{Chernoff's bound} (see e.g.~\cite[Corollary~2.3]{JLR}) is that
\begin{equation}\label{eq:chern}
\Prob( |X- \E [X] | \ge \eps \, \E [X]) \le 2\exp \left( - \frac {\eps^2 \E [X]}{3} \right)
\end{equation}
for  $0 < \eps < 3/2$.

\subsection{Hoeffding-Azuma inequality}

Let $X_0, X_1, \ldots$ be an infinite sequence of random variables that is a martingale; that is, for any $a \in \Nn$ we have $\E [X_{a} | X_{a-1}] = X_{a-1}$. Suppose that there exist constants $c_a > 0$ such that $|X_a-X_{a-1}| \le c_a$ for each $a \le t$. Then, the \emph{Hoeffding-Azuma inequality} implies that for every $b > 0$,
\begin{equation}\label{eq:h-a}
\Prob( \exists i (0 \le i \le t) : |X_i - X_0| \ge b) \le 2\exp \left( - \frac {b^2}{2 \sum_{a=1}^t c_a^2} \right).
\end{equation}

\subsection{Coupling}

We will show now that one may couple the processes run on paths and cycles. This coupling will allow us to translate bounds obtained for one class to another one. The argument applies provided that $k=o(n)$, and the coupling can only be established a.a.s.\ but it is enough as our main result holds a.a.s.\ anyway.

\begin{lemma}\label{lem:coupling}
Suppose that $k = o(n)$. The processes on $P_n$ and $C_{2(n-1)}$ can be coupled such that a.a.s.\ 
$$
\xi(P_n,k) \le \xi(C_{2(n-1)},k).
$$
\end{lemma}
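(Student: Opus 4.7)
The idea is to realise $P_n$ as a quotient of $C_{2(n-1)}$ under a folding map and argue that, after a small correction to the initial distribution, the path broadcasting process is exactly the projection of the cycle broadcasting process.

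I would label the vertices of $C_{2(n-1)}$ as $\{0,1,\ldots,2n-3\}$ and define $\pi\colon V(C_{2(n-1)})\to V(P_n)$ by $\pi(j)=j+1$ for $0\le j\le n-1$ and $\pi(j)=2n-1-j$ for $n-1\le j\le 2n-3$. Each interior path vertex then has two cycle preimages, whereas the path endpoints $1$ and $n$ have unique preimages ($0$ and $n-1$, respectively). A direct check shows that $\pi$ carries every cycle edge to a path edge and that the two cycle neighbours of the preimage of an endpoint both map to that endpoint's unique path neighbour. Therefore, if $X(t)$ is a simple random walk on $C_{2(n-1)}$, then $\pi(X(t))$ is a simple random walk on $P_n$. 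Running $k$ independent cycle walks and projecting yields $k$ independent path walks, and because $\pi$ never collapses adjacent cycle vertices, every vertex meeting and every edge swap in the cycle process produces the corresponding meeting in the projected path process (and possibly extra coincidences at interior path vertices, which only help). Consequently, whenever the projected initial positions happen to form a uniform i.i.d.\ sample from $V(P_n)^k$, the projected process is a genuine realisation of the path broadcasting process, and its broadcasting time is no larger than that of the cycle process.

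The only obstacle is the mismatch of starting distributions: the push-forward of $\mathrm{Unif}(V(C_{2(n-1)}))$ puts mass $1/(2n-2)$ on each endpoint of $P_n$ and mass $1/(n-1)$ on each interior vertex, which is not uniform. A short calculation bounds the total variation distance between this distribution and $\mathrm{Unif}(V(P_n))$ by $(n-2)/(n(n-1))=O(1/n)$. Since total variation of product measures is subadditive in the number of coordinates, the joint law of the $k$ projected initial positions differs from $\mathrm{Unif}(V(P_n))^{\otimes k}$ by at most $O(k/n)$ in total variation, and this is $o(1)$ precisely because $k=o(n)$. Applying the maximal coupling of the two initial distributions, with probability $1-o(1)$ the projected starting configuration equals an honest uniform $k$-sample on $V(P_n)$; on this a.a.s.\ event the projection is a valid uniformly started path process stochastically dominated in time by the cycle process, yielding $\xi(P_n,k)\le \xi(C_{2(n-1)},k)$. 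The main difficulty is precisely this mild non-uniformity of the push-forward measure; once $\pi$ is in place, the transfer of random walks and of meetings is automatic, and the assumption $k=o(n)$ enters the proof only through the product-measure total variation bound.
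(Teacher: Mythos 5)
Your proposal is correct and is essentially the paper's own argument: fold $C_{2(n-1)}$ onto $P_n$ via the absolute-value/reflection map, observe that the projection of a simple random walk on the cycle is a simple random walk on the path and that meetings (including edge swaps, since the folding never identifies adjacent cycle vertices) project to meetings, and then repair the slight non-uniformity of the projected initial distribution using $k=o(n)$. The only cosmetic difference is in that last step: you bound the total variation distance of the product measures by $O(k/n)$ and invoke a maximal coupling, whereas the paper achieves exact uniformity by independently declaring each agent ``unusual'' with probability $1/n$ and redirecting its avatar to an endpoint, then noting that a.a.s.\ no agent is unusual.
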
 
\begin{proof}
To simplify the notation, we are going to label the vertices of $P_n$ and $C_{2(n-1)}$ slightly differently than in the rest of the paper. Vertices of $P_n$ are labelled as follows: $V(P_n) = \{0, 1, \ldots, n-1\}$. On the other hand, vertices of $C_{2(n-1)}$ have labels from the set $V(C_{2(n-1)}) = \{ -(n-2), -(n-3), \ldots, -1, 0, 1, \ldots, (n-2), (n-1)\}$. The coupling will identify vertices $i$ and $-i$ on $C_{2(n-1)}$ with a vertex $i$ on $P_n$ ($i \in [n-2]$); vertex $0$ and $n-1$ on the cycle will be mapped to $0$ and, respectively, $n-1$ on the path---see Figure~\ref{fig:coupling}. There is a slight complication with making sure the agents start the process from a uniform distribution on the corresponding set of nodes. Because of that the result holds only a.a.s.\ and we need an assumption that $k=o(n)$.

\begin{figure}
\begin{center}
\begin{tikzpicture}[scale=0.7,node distance=9cm]
\coordinate (1) at (6,0);
\coordinate (2) at (4,1.5);
\coordinate (3) at (2,1.5);
\coordinate (4) at (-2,1.5);
\coordinate (5) at (-4,1.5);
\coordinate (6) at (-6,0);
\coordinate (7) at (-4,-1.5);
\coordinate (8) at (-2,-1.5);
\coordinate (9) at (2,-1.5);
\coordinate (10) at (4,-1.5);
\coordinate (label_box_1) at (-10,0);

\filldraw[color=red!60, fill=red!5, very thick](5) circle (0.7);
\filldraw[color=red!60, fill=red!5, very thick](10) circle (0.7);

\filldraw (1) circle (3pt) node [right,scale=0.7] {$n-1$};
\filldraw (2) circle (3pt) node [above,scale=0.7] {$n-2$};
\filldraw (3) circle (3pt) node [above,scale=0.7] {$n-3$};
\filldraw (4) circle (3pt) node [above,scale=0.7] {$2$};
\filldraw (5) circle (3pt) node [above,scale=0.7] {$1$};
\filldraw (6) circle (3pt) node [left,scale=0.7] {$0$};
\filldraw (7) circle (3pt) node [below,scale=0.7] {$-1$};
\filldraw (8) circle (3pt) node [below,scale=0.7] {$-2$};
\filldraw (9) circle (3pt) node [below,scale=0.7] {$-(n-3)$};
\filldraw (10) circle (3pt) node [below,scale=0.7] {$-(n-2)$};
\filldraw (label_box_1) circle (0pt) node [right] {$C_{2(n-1)}$};

\draw[thick,black] (1) -- (2);
\draw[thick,black] (2) -- (3);
\draw[dotted,black] (3) -- (4);
\draw[thick,black] (4) -- (5);
\draw[thick,black] (5) -- (6);
\draw[thick,black] (6) -- (7);
\draw[thick,black] (7) -- (8);
\draw[dotted,black] (8) -- (9);
\draw[thick,black] (9) -- (10);
\draw[thick,black] (10) -- (1);

\coordinate (0a) at (-6,-4);
\coordinate (1a) at (-4,-4);
\coordinate (2a) at (-2,-4);
\coordinate (3a) at (2,-4);
\coordinate (4a) at (4,-4);
\coordinate (5a) at (6,-4);
\coordinate (label_box_2) at (-10,-4);

\filldraw[color=red!60, fill=red!5, very thick](1a) circle (0.7);
\filldraw[color=red!60, fill=red!5, very thick](4a) circle (0.7);

\filldraw (0a) circle (3pt) node [below,scale=0.7] {$0$};
\filldraw (1a) circle (3pt) node [below,scale=0.7] {$1$};
\filldraw (2a) circle (3pt) node [below,scale=0.7] {$2$};
\filldraw (3a) circle (3pt) node [below,scale=0.7] {$n-3$};
\filldraw (4a) circle (3pt) node [below,scale=0.7] {$n-2$};
\filldraw (5a) circle (3pt) node [below,scale=0.7] {$n-1$};
\filldraw (label_box_2) circle (0pt) node [right] {$P_n$};

\draw[thick,black] (0a) -- (1a);
\draw[thick,black] (1a) -- (2a);
\draw[dotted,black] (2a) -- (3a);
\draw[thick,black] (3a) -- (4a);
\draw[thick,black] (4a) -- (5a);

\draw[dashed,gray] (2) -- (10);
\draw[dashed,gray] (3) -- (9);
\draw[dashed,gray] (4) -- (8);
\draw[dashed,gray] (5) -- (7);

\draw[dashed,gray] (0a) -- (6);
\draw[dashed,gray] (1a) -- (7);
\draw[dashed,gray] (2a) -- (8);
\draw[dashed,gray] (3a) -- (9);
\draw[dashed,gray] (4a) -- (10);
\draw[dashed,gray] (5a) -- (1);

\end{tikzpicture}
\end{center}
\caption{Coupling between $C_{2(n-1)}$ and $P_n$. Agents walking on the cycle and their avatars walking on the path are in red cycles.}
\label{fig:coupling}
\end{figure}
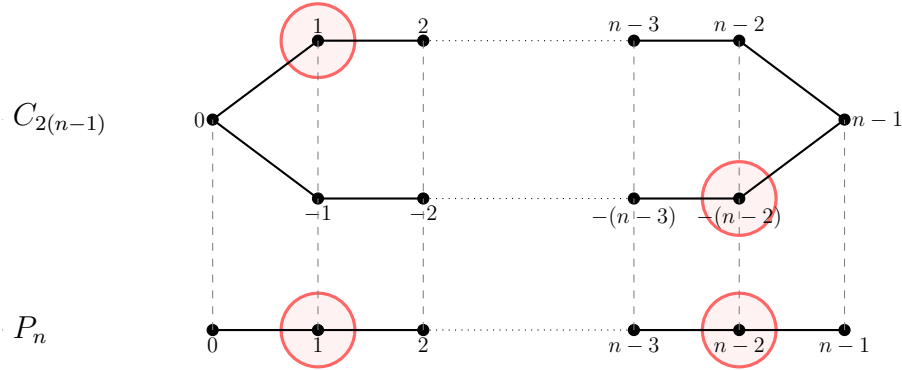

Agents start independently and uniformly at random on $C_{2(n-1)}$, as they should. Each agent, independently, becomes \emph{unusual} with probability $1/n$. Unusual agents put their \emph{avatars} on $P_n$ at vertex $0$ with probability $1/2$, and on vertex $n-1$ otherwise. Usual (that is, not unusual) agents that start at vertex $i$ on $C_{2(n-1)}$, place their avatars on $P_n$ at vertex $|i|$. It is easy to see that avatars are distributed uniformly at random on $P_n$; in particular, the probability that a given agent puts her avatar at vertex $0$ on the path is equal to
$$
\frac {1}{n} \cdot \frac {1}{2} + \left( 1 - \frac {1}{n} \right) \cdot \frac {1}{2(n-1)} = \frac {1}{2n} + \frac {1}{2n} = \frac {1}{n}.
$$

Since it is assumed that $k=o(n)$, the expected number of unusual agents is equal to $k/n = o(1)$ and so a.a.s.\ there is no unusual agent by the first moment method. If there is at least one unusual agent, then we simply stop the coupling and claim no bound for the two random variables. On the other hand, if there is no unusual agent, then agent occupying vertex $i$ on $C_{2(n-1)}$ has her avatar placed on vertex $|i|$ on $P_n$ and we may continue. Agents start walking randomly on the cycle and their avatars will follow them accordingly, that is, if an agent moves from vertex $a$ to vertex $b$ on the cycle, her avatar moves from vertex $|a|$ to vertex $|b|$ on the path. It is clear that avatars also perform independent random walks but on the path instead of the cycle. The two corresponding random walks are coupled but the broadcasting processes are performed independently on both graphs. By coupling, if two agents meet, then their avatars also meet but the converse might not be true---avatars meeting on the path might correspond to agents that occupy different vertices (again, see Figure~\ref{fig:coupling}). Hence, when all agents become green, then all avatars must be green too. This establishes the desired relationship between $\xi(P_n,k)$ and $\xi(C_{2(n-1)},k)$, and the proof of the lemma is finished.
\end{proof}

\subsection{Large $k$}\label{pathslargek}

In this section, we show that a.a.s.\ $\xi(P_n,k) = \Theta(n)$, provided that $k$ is sufficiently large, that is, $k \ge 50 n \ln n$. The coupling (Lemma~\ref{lem:coupling}) cannot be applied to this range of parameter $k$ but the proof works for cycles as well as paths. This proves part~(e) of Theorem~\ref{thm:main}. 

\medskip

Let us start with the following useful observation. Suppose that an agent starts at vertex $i \in [n]$. For a given $t \in \Nn \cup \{0\}$ and $j \in [n]$, let $P_t(i,j)$ be the probability that the agent occupies vertex $j$ at round $t$. Then the following holds.

\begin{lemma}\label{lem:symmetry}
For any $t \in \Nn \cup \{0\}$ and $i,j \in [n]$, we have that
\begin{equation}\label{eq:symmetry}
P_t(i,j) = P_t(j,i),
\end{equation}
provided that $i \notin \{1,n\}$ and $j \notin \{1,n\}$. More importantly, we always have that
\begin{equation}\label{eq:symmetry2}
\frac {P_t(j,i)}{2} \le P_t(i,j) \le 2 P_t(j,i).
\end{equation}
\end{lemma}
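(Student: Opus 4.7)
The approach is through reversibility of the simple random walk on $P_n$. The walk has stationary distribution $\pi$ proportional to vertex degrees; since $\deg(1) = \deg(n) = 1$ and $\deg(v) = 2$ for every interior vertex $v$, and $\sum_v \deg(v) = 2(n-1)$, we obtain $\pi(1) = \pi(n) = 1/(2(n-1))$ and $\pi(v) = 1/(n-1)$ for $v \in \{2, \ldots, n-1\}$. The one-step detailed balance equation $\pi(i) P(i,j) = \pi(j) P(j,i)$ is then immediate: both sides equal $1/(2(n-1))$ whenever $ij \in E(P_n)$, and both sides vanish otherwise.

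The next step is to extend detailed balance to an arbitrary number of steps by induction on $t$. The base case $t = 0$ is trivial since $P_0(i,j) = \mathbf{1}[i=j]$. For the inductive step, writing $P_{t+1}(i,j) = \sum_{k \in [n]} P(i,k) P_t(k,j)$, applying the one-step detailed balance to $\pi(i) P(i,k)$, and then applying the inductive hypothesis to $\pi(k) P_t(k,j)$, one obtains $\pi(i) P_{t+1}(i,j) = \pi(j) P_{t+1}(j,i)$. Hence
$$
\pi(i) P_t(i,j) = \pi(j) P_t(j,i) \qquad \text{for every } t \ge 0 \text{ and every } i,j \in [n].
$$

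Both conclusions of the lemma now follow at once. When $i, j \in \{2, \ldots, n-1\}$ we have $\pi(i) = \pi(j)$, hence $P_t(i,j) = P_t(j,i)$, which is equation~(\ref{eq:symmetry}). For the general bound~(\ref{eq:symmetry2}), the ratio $\pi(j)/\pi(i)$ always lies in $\{1/2,\,1,\,2\}$ --- it equals $2$ exactly when $i$ is a boundary vertex and $j$ is not, equals $1/2$ in the reverse situation, and equals $1$ in all remaining cases --- so $P_t(i,j) = (\pi(j)/\pi(i))\, P_t(j,i)$ automatically satisfies $P_t(j,i)/2 \le P_t(i,j) \le 2 P_t(j,i)$. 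There is no substantive obstacle here: the only point to watch is the factor-$2$ discrepancy between the stationary probabilities at the endpoints and the interior, which is precisely what produces the constant $2$ in the second inequality.
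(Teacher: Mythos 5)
Your proof is correct and follows essentially the same route as the paper: reversibility of the simple random walk together with the explicit stationary distribution $\pi(x)=\deg(x)/(2|E|)$. The only difference is that you spell out the induction extending detailed balance from one step to $t$ steps, which the paper instead cites from a standard reference.
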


\begin{proof}
The lemma is an instant corollary of the fact that the associated simple random walk is reversible, see~\cite[Section~1.6]{book_Yuval}, and that the stationary distribution is near uniform. Indeed, since walking on a graph is reversible,
$$
\pi(i) P_t(i,j) = \pi(j) P_t(j,i),
$$
where $\pi(x)$ is the stationary distribution. The conclusion follows from the fact that
$$
\pi(x) = \frac {\deg(x)}{2|E|} = 
\begin{cases}
\frac {1}{2(n-1)} & \text{ if } x \in \{1,n\},\\
\frac {2}{2(n-1)} & \text{ otherwise.} 
\end{cases}
$$
The proof of the lemma is finished.
\end{proof}

\noindent \textbf{Adjustment to cycles}: The lemma holds for cycles. In fact, property~(\ref{eq:symmetry}) holds for \emph{all} $i$ and $j$ (since $\pi(x)$ is uniform on cycles) and so the weaker property~(\ref{eq:symmetry2}) trivially holds.

\medskip

We will now show that there are plenty of agents on each vertex at any round of the process, provided that it ends in at most $n$ rounds. 

\begin{lemma}\label{lem:many_agents}
Consider the process on a path $P_n$ with $k = k(n) \ge 50 n \ln n $ agents. Then, a.a.s.\ the following holds: for any $t \in [n] \cup \{0\}$ and any $j \in [n]$, the number of agents occupying vertex $j$ at round $t$ is at least $12 \ln n$. 
\end{lemma}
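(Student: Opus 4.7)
The plan is to fix a round $t \in [n] \cup \{0\}$ and a vertex $j \in [n]$, bound the probability that fewer than $12 \ln n$ agents occupy vertex $j$ at round $t$ by some $o(n^{-2})$ quantity, and then union bound over the $(n+1)n = O(n^2)$ pairs $(t,j)$.

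For the single-pair estimate, let $X_{t,j}$ denote the number of agents occupying vertex $j$ at round $t$. Since the $k$ agents start independently and uniformly at random and perform independent random walks, $X_{t,j}$ is a sum of $k$ i.i.d.\ Bernoulli random variables, where each indicator has success probability
\[
q_t(j) \ := \ \frac{1}{n} \sum_{i \in [n]} P_t(i,j).
\]
The first order of business is to lower-bound $q_t(j)$ by roughly $1/(2n)$. Applying the estimate $P_t(i,j) \ge P_t(j,i)/2$ from Lemma~\ref{lem:symmetry} term by term and then using that $\sum_i P_t(j,i) = 1$, I obtain
\[
q_t(j) \ \ge \ \frac{1}{2n} \sum_{i \in [n]} P_t(j,i) \ = \ \frac{1}{2n}.
\]
(For the case $t=0$ the bound is an exact equality $q_0(j) = 1/n$, and for cycles Lemma~\ref{lem:symmetry} gives the sharper $q_t(j) = 1/n$ for all $t$; in either case the estimate $q_t(j) \ge 1/(2n)$ suffices.) Consequently $\E[X_{t,j}] \ge k/(2n) \ge 25 \ln n$ by the assumption $k \ge 50 n \ln n$.

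Now I apply the Chernoff bound~(\ref{eq:chern}) with $\eps = 13/25 < 3/2$. Since $(1-\eps)\E[X_{t,j}] \ge (12/25)\cdot 25 \ln n = 12 \ln n$, this yields
\[
\Prob(X_{t,j} < 12 \ln n) \ \le \ 2\exp\!\left( -\frac{\eps^2 \E[X_{t,j}]}{3} \right) \ \le \ 2\exp\!\left( -\frac{(13/25)^2 \cdot 25 \ln n}{3} \right) \ = \ O(n^{-c})
\]
for some absolute constant $c > 2$ (concretely $c = 169/75 > 2.25$). A union bound over the $n+1$ choices of $t$ and the $n$ choices of $j$ gives an overall failure probability of $O(n^{2-c}) = o(1)$, which is the desired a.a.s.\ conclusion.

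There is no single hard step; the only thing that requires care is showing that the reversibility-based bound $q_t(j) \ge 1/(2n)$ works uniformly in $t$ (so we do not need any mixing-time estimate) and then arranging the Chernoff constants so that the exponent beats the union bound over $n^2$ pairs. The argument for $C_n$ is identical, in fact cleaner since the stationary distribution is exactly uniform and $q_t(j) = 1/n$ holds outright.
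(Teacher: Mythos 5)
Your proposal is correct and follows essentially the same route as the paper: the same lower bound $q_t(j)\ge 1/(2n)$ via the reversibility estimate of Lemma~\ref{lem:symmetry}, followed by a Chernoff bound and a union bound over the $O(n^2)$ pairs $(t,j)$. The only (cosmetic) differences are your choice $\eps=13/25$ instead of the paper's $\eps=1/2$ applied to a stochastically dominating $\Bin(50n\ln n, 1/(2n))$ variable; both yield an exponent exceeding $2$ and hence the same conclusion.
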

\begin{proof}
Fix any $t \in [n] \cup \{0\}$ and any $j \in [n]$, and let us concentrate on a given agent $A$. Let $B(i)$ be the event that agent $A$ starts at vertex $i$, and let $C(j)$ be the event that agent $A$ occupies vertex $j$ at time $t$. It follows that 
$$
\Prob \big( C(j) \big) = \sum_{i \in [n]} \Prob \big( C(j) \wedge B(i) \big) = \sum_{i \in [n]} \Prob \big( C(j) ~|~ B(i) \big) \cdot \Prob \big( B(i) \big).
$$
Since agent $A$ starts on a vertex selected uniformly at random from $V$, $\Prob \big( B(i) \big) = 1/n$. After noticing that $\Prob \big( C(j) ~|~ B(i) \big)$ is exactly $P_t(i,j)$, we get from Lemma~\ref{lem:symmetry} that
$$
\Prob \big( C(j) \big) = \frac {1}{n} \sum_{i \in [n]} P_t(i,j) \ge \frac {1}{2n} \sum_{i \in [n]} P_t(j,i) = \frac {1}{2n},
$$
as, trivially, $\sum_{i \in [n]} P_t(j,i) = 1$.

Since $k \ge 50 n \ln n$ agents select their starting points independently and perform independent random walks afterwards, the number of agents occupying vertex $j$ at round $t$ can be stochastically lower bounded by the random variable $X \sim \Bin( 50 n \ln n, 1/(2n))$. Note that $\E[X] = 25 \ln n$ and so it follows from the Chernoff inequality~(\ref{eq:chern}) applied with $\eps = 1/2$ that
\begin{eqnarray*}
\Prob \big( X \le 12 \ln n \big) &\le& \Prob \left( |X-\E[X]| \ge \frac 12 \ \E[X] \right) \\
&\le& 2 \exp \left( - \frac {1}{12} \ \E[X] \right) = 2 n^{-25/12} = o(n^{-2}).
\end{eqnarray*}
Since there are $n+1$ choices for $t$ and $n$ choices for $j$, the desired property fails for some pair of $t$ and $n$ with probability at most $n(n+1) \cdot o(n^{-2}) = o(1)$ and so the desired property holds a.a.s.\ and the proof is finished.
\end{proof}

\noindent \textbf{Adjustment to cycles}: Since Lemma~\ref{lem:symmetry} holds also for cycles, the exact same proof of the above lemma extends to cycles.

\medskip

Now, we are ready to show both an upper and a lower bound for $\xi(P_n, k)$ for $k = k(n) \ge 50 n \ln n$. 

\begin{theorem}\label{thm:large_k}
For any $k = k(n) \ge 50 n \ln n$, a.a.s. 
$
\lfloor n/2 \rfloor \le \xi(P_n, k) \le n-1.
$ 
\end{theorem}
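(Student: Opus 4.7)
The plan is to combine Lemma~\ref{lem:many_agents}, which guarantees at least $12\ln n$ agents at every vertex at every round $t \le n$, with a simple light-cone observation. Write $v_0$ for the starting vertex of the initial green agent, and for $t \ge 0$ set $I_t := [\max(1, v_0 - t), \min(n, v_0 + t)]$.

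For the upper bound $\xi(P_n,k) \le n-1$, I would prove by induction on $t \in \{0, 1, \ldots, n-1\}$ that a.a.s.\ every agent at a vertex of $I_t$ at round $t$ is green. The base case $t = 0$ is immediate, since all agents at $v_0$ meet the initial green agent. For the inductive step, Lemma~\ref{lem:many_agents} supplies at least $12\ln n$ agents at each $u \in I_t$ at round $t$, all green by the inductive hypothesis; each moves independently to $u - 1$ or $u + 1$ with probability $1/2$ each (and at the two path endpoints the unique move is forced). Hence the probability that no green agent crosses a given edge incident to $u$ is at most $2^{-12\ln n} = o(n^{-3})$, and a union bound over the $O(n^2)$ pairs of vertex and round keeps the failure probability $o(1)$. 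Consequently, for every $v \in I_{t+1}$ at least one green agent arrives at $v$ at round $t+1$ from an adjacent vertex of $I_t$; this green agent infects every other agent present at $v$, completing the induction. Since $I_{n-1} = [1, n]$, all agents are green by round $n - 1$.

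For the lower bound $\xi(P_n,k) \ge \lfloor n/2 \rfloor$, I would use the complementary light-cone fact that the set of vertices hosting at least one green agent at round $t$ is always contained in $[v_0 - t, v_0 + t] \cap [1, n]$; this follows by induction since greenness propagates by at most one vertex per round, whether via the motion of a green agent or via an edge-passing meeting. At round $\xi$ every agent is green, and by Lemma~\ref{lem:many_agents} (applicable because the upper bound has already shown $\xi \le n$ a.a.s.) every vertex hosts at least one agent, hence at least one green one. This forces $[1, n] \subseteq [v_0 - \xi, v_0 + \xi]$, which in turn yields $\xi \ge \max(v_0 - 1, n - v_0) \ge \lceil (n-1)/2 \rceil = \lfloor n/2 \rfloor$.

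The only nontrivial obstacle is the concentration step in the inductive upper bound: the green frontier has to advance on both sides at every one of the $\Theta(n)$ rounds, and this is exactly what the hypothesis $k \ge 50\,n \ln n$ buys through Lemma~\ref{lem:many_agents}---it places $12\ln n$ agents at every frontier vertex, which drives the per-edge failure probability polynomially small and absorbs the union bound. The light-cone and pigeonhole-style argument for the lower bound is then essentially elementary.
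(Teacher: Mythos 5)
Your proposal is correct and follows essentially the same route as the paper: both use Lemma~\ref{lem:many_agents} to place $12\ln n$ agents at every vertex, drive the frontier forward one vertex per round with per-edge failure probability $2^{-12\ln n}$ plus a union bound for the upper bound, and use the one-vertex-per-round light-cone containment together with the guaranteed occupancy of every vertex for the lower bound. Your write-up merely makes the light-cone induction for the lower bound slightly more explicit than the paper does.
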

\begin{proof}
By Lemma~\ref{lem:many_agents}, since we aim for a result that holds a.a.s., we may assume that for any round $t \in [n] \cup \{0\}$ and any vertex $j \in [n]$, the number of agents occupying vertex $j$ at round $t$ is at least $12 \ln n$. We will say that a vertex $j$ is green if it is occupied by green agents; otherwise, it is white, that is, it is occupied by white agents. In particular, this means that in the very first round ($t=0$) there is precisely one green vertex.

Suppose that at the end of some round $t \in [n-2] \cup \{0\}$, some vertex $j$ is green whereas a neighbouring vertex $i$ ($i \in \{ j-1, j+1\}$) is white. Since there are at least $12 \ln n$ green agents occupying $j$ at the end of round $t$, the probability that $i$ stays white in round $t+1$ is at most 
$$
(1/2)^{12 \ln n} = \exp \big( - 12 (\ln 2) \ln n \big) \le n^{-8} = o(n^{-1}).
$$
After applying this argument $n-1$ times, we get that a.a.s.\ at round $t \in [n-1] \cup \{0\}$ vertices at distance at most $t$ from the initial green vertex are green. In particular, a.a.s.\ all vertices become green in at most $n-1$ rounds, and so the desired upper bound holds. In fact, conditioning on the event that the initial green vertex is vertex $j \in [n]$, we get that a.a.s.\ $\xi(P_n, k) = \max \{ j-1, n-j \}$. Since $\max \{ j-1, n-j \} \ge \lfloor n/2 \rfloor$, the desired lower bound holds too, and the proof is finished.
\end{proof}


\medskip

\noindent \textbf{Adjustment to cycles}: The same argument works for cycles. Due to the symmetry, a.a.s.\ $\xi(C_n, k) = \lfloor n/2 \rfloor$.

\subsection{Walking on Integers}

Let us take a short break from our problem and briefly discuss a closely related and classical problem: walking on integers. The \emph{simple random walk on $\Zz$} starts with $X_0 = 0$ and in each round $t \in \Nn$, $X_t = X_{t-1}-1$ with probability $1/2$; otherwise, $X_t = X_{t-1}+1$. Alternatively, the \emph{lazy simple random walk on $\Zz$} starts with $X_0 = 0$ and in each round $t \in \Nn$, $X_t = X_{t-1}-1$ with probability $1/4$, $X_t = X_{t-1}+1$ with probability $1/4$, and  $X_t = X_{t-1}$ otherwise. 

It is easy to see that the sequence $X_0, X_1, \ldots$ is a martingale. In particular, the Hoeffding-Azuma inequality~\eqref{eq:h-a} can be applied to show that for small values of $t$, $X_t$ has to be relatively close to zero a.a.s. On the other hand, if $t$ is large, then a.a.s.\ $X_t$ moves away from the origin. We will need this well-known observation to establish some of our bounds. We provide the proof for completeness but for more details we direct the reader to, for example,~\cite{book_Yuval} or any other book on random walks. 

Let us first concentrate on the simple random walk. Observe that $X_t$ and $t$ are of the same parity, that is, $t-X_t$ is even. Provided that $t-a$ is even, there are $\binom{t}{\frac {t-a}{2}}$ walks of length $t$ from $0$ to $a$. Combining the two observations together we get that for any $-t \le a \le t$
$$
\Prob \left( X_t = a \right) = 
\begin{cases}
\binom{t}{\frac {t-a}{2}} 2^{-t} & \text{if $t-a$ is even}, \\
0 & \text{otherwise.}
\end{cases}
$$
It follows that for any $-t \le a \le t$, we have
$$
\Prob \left( X_t = a \right) \le \binom{t}{\lceil t/2 \rceil} 2^{-t} = \frac {t!} {\lceil t/2 \rceil ! \lfloor t/2 \rfloor!} 2^{-t} \sim \sqrt{ \frac {2}{\pi t} },
$$
where the asymptotic bound follows from \emph{Stirling's formula} ($t! \sim \sqrt{2 \pi t} (t/e)^t$). Similarly, for the lazy simple random walk, we get that for any $-t \le a \le t$
\begin{eqnarray*}
\Prob \left( X_t = a \right) &\le& \sum_{s = 0}^t \ \Prob \big( \Bin(t,1/2) = s \big) \cdot \binom{s}{\lceil s/2 \rceil} 2^{-s}.
\end{eqnarray*}
(Variable $s$ in the above formula controls the number of rounds the walk actually moves.) Chernoff's bound~\eqref{eq:chern} applied with $\eps = 1/t^{1/3}$ implies that 
\begin{eqnarray*}
\Prob \left( X_t = a \right) &\le& o(1/t) +  \sum_{s = t/2-t^{2/3}}^{t/2+t^{2/3}} \ \Prob \big( \Bin(t,1/2) = s \big) \cdot \binom{s}{\lceil s/2 \rceil} 2^{-s} \\
&\le& o(1/t) + (1+o(1)) \sum_{s = t/2-t^{2/3}}^{t/2+t^{2/3}} \ \Prob \big( \Bin(t,1/2) = s \big) \cdot \sqrt{ \frac {2}{\pi s} } \\
&\le& o(1/t) + (1+o(1)) \sqrt{ \frac {2}{\pi (t/2)} } \sum_{s = t/2-t^{2/3}}^{t/2+t^{2/3}} \ \Prob \big( \Bin(t,1/2) = s \big) \sim \sqrt{ \frac {4}{\pi t} }.
\end{eqnarray*}
Hence, regardless of whether we deal with lazy random walks or not, for $t$ large enough and any $a \ge 1$ we have
\begin{equation}\label{eq:random_walk}
\Prob \left( |X_t| < a \right) \le \frac {4a}{\sqrt{t}}\,.
\end{equation} 

We will also need the following result on the hitting time defined as follows:
$$
\tau_a = \min \{ t \ge 0 : X_t = a \},
$$      
that is, $\tau_a$ is the first time the walk hits $a$. Using the \emph{reflection principle}, one can show that 
\begin{equation}\label{eq:hitting_time}
\Prob (\tau_a > t) = \Prob( -a < X_t \le a) \le \frac {4a}{\sqrt{t}}\,.
\end{equation}
(See Lemma~2.21 in~\cite{book_Yuval} that applies to both lazy and non-lazy simple random walks.)

\subsection{Small $k$}\label{sec:small_values_of_k}

Let $\omega=\omega(n)$ be any function that tends to infinity as $n \to \infty$. In this section, we show that a.a.s.\ $\xi(P_n,k) \ge n^2 / (\omega k^2 \ln k)$ and $\xi(C_n,k) \le n^2 \omega$, provided that $k \le \omega \ln n$. These two bounds, together with the coupling (Lemma~\ref{lem:coupling}), prove parts~(a) and~(b) of Theorem~\ref{thm:main}. 

\medskip

We start by proving an upper bound for $\xi(C_n,k)$. It is a strong bound for small values of $k$ but a weak one for large values of $k$. Recall that, in particular, $\xi(C_n,k) = O(n)$ for $k \ge 50 n \ln n$. However, since it holds for all values of $k$, we state it here in full generality.

\begin{theorem}\label{thm:small_k_upperbound}
Let $\omega=\omega(n)$ be any function that tends to infinity as $n \to \infty$. 
For any $k = k(n) \ge 2$, a.a.s.\
$
\xi(C_n, k) \le n^2 \omega.
$
\end{theorem}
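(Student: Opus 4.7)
The plan is to reduce the broadcasting time to pairwise meeting times: one uses $\xi(C_n,k) \le \max_{i=1,\ldots,k-1} T_i$, where $T_i$ denotes the first meeting time between the initial green agent $G_0$ and the $i$-th other agent $A_i$. This bound holds trivially because at the instant $G_0$ meets $A_i$, the latter must be green (even if she was informed earlier through another chain of meetings).

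For a single pair, I would exploit the fact that the difference of positions $D_t = \mathrm{pos}_t(A_i) - \mathrm{pos}_t(G_0) \bmod n$ performs a lazy random walk on $\mathbb{Z}_n$, and $T_i$ is essentially its hitting time of $0$ (the ``edge swap'' rule introduced in Section~\ref{sec:proglem} takes care of the parity mismatch when $n$ is even). Standard cycle hitting-time estimates give $\E[T_i] = O(n^2)$. By chopping time into blocks of length $C n^2$ and arguing via the strong Markov property that in each block the difference walk hits $0$ with probability at least some absolute constant $\eta > 0$ (since after $C n^2$ steps the walk is essentially mixed and the stationary mass of $0$ is $1/n$, giving an $\Omega(1)$ visit probability over $n$ additional steps), I would derive the exponential tail $\Pr[T_i > t] \le C_1 \exp(-c\, t/n^2)$ for absolute constants $c, C_1 > 0$.

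Setting $t = n^2 \omega$ and applying a union bound over the $k-1$ other agents yields
\[
\Pr\left[\max_i T_i > n^2 \omega\right] \le (k-1)\,C_1\, e^{-c\omega} = o(1),
\]
which handles every $k$ up to $e^{c\omega/2}$, in particular all $k \le \omega \ln n$ and well beyond. For $k$ large enough that this union bound no longer suffices---in particular $k \ge 50\, n \ln n$---one simply invokes Theorem~\ref{thm:large_k}, which already gives $\xi(C_n,k) = O(n) \le n^2 \omega$ for free. A small overlap between these two regimes (ensured, e.g., by noting that $e^{c\omega}$ grows faster than any constant while $50 n \ln n$ is a fixed-in-$n$ threshold) covers every intermediate $k$.

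The main obstacle will be (i) establishing the exponential tail cleanly, in particular handling the parity/bipartiteness issues of lazy walks on even-length cycles via the edge-swap convention, and (ii) stitching the ``pair meeting + union bound'' and ``Theorem~\ref{thm:large_k}'' regimes together so as to cover every $k \ge 2$ with no gap. Careful choice of constants in the block-length argument, together with the monotone intuition that adding more agents only helps the broadcast, should make both points straightforward to carry out.
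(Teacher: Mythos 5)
Your reduction $\xi(C_n,k)\le\max_{i}T_i$ (with $T_i$ the meeting time of the initial green agent with agent $A_i$) and your exponential tail $\Prob(T_i>t)\le C_1e^{-ct/n^2}$ for a single pair are both fine, but the way you stitch the regimes together has a genuine gap. The union bound $(k-1)C_1e^{-c\omega}$ is $o(1)$ only for $k=o(e^{c\omega})$, while Theorem~\ref{thm:large_k} only kicks in at $k\ge 50n\ln n$; since $\omega$ may tend to infinity arbitrarily slowly (say $\omega=\ln\ln\ln n$), the window $e^{c\omega}\ll k<50n\ln n$ is enormous (it contains, e.g., $k=\sqrt n$). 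Your claim that this window is empty because ``$e^{c\omega}$ grows faster than any constant while $50n\ln n$ is a fixed-in-$n$ threshold'' is a non sequitur: $50n\ln n\to\infty$ and vastly exceeds $e^{c\omega}$ for slowly growing $\omega$. Worse, in that window the intermediate statement you would need is simply false, not just unproven: conditionally on the trajectory of $G_0$ the times $T_i$ are independent with tails of order $e^{-ct/n^2}$, so $\max_iT_i$ is typically of order $n^2\ln k\gg n^2\omega$ when $\ln k\gg\omega$. The theorem survives only because agents are informed by \emph{any} green agent, not just $G_0$, and that is exactly the information your reduction throws away. Finally, the ``adding agents only helps'' monotonicity you invoke as a safety net is not available: extra agents are extra spreaders but also extra targets, and no such coupling is established (or obvious) in this model.

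The paper closes this hole with a different, union-bound-free idea that you are missing: it tracks a \emph{single} green--white pair via the difference walk $Y_t$ on $\Zz$ (a lazy walk with steps $\pm2$), and uses the anti-concentration bound $\Prob(|X_t|<a)\le 4a/\sqrt t$ with $a=2n+1$ and $t=n^2\omega$ to show that a.a.s.\ $|Y_t|\ge 2n+1$. Since $Y$ starts in $[0,n]$ and moves by at most $2$ per step, it must pass through $\{n,n+1\}$ (first meeting) and then $\{2n,2n+1\}$ (second meeting); between these two times the relative displacement increases by a full lap of the cycle, so \emph{every} other agent is overtaken by one of the two (by then green) agents during the sweep. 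This informs all $k$ agents simultaneously, for every $k\ge2$, with no union bound and no appeal to Theorem~\ref{thm:large_k}. If you want to keep your pairwise-meeting framework, you would need to replace the ``$G_0$ meets everyone'' reduction by some such global sweeping argument (or by a genuinely multi-source analysis) in the intermediate range of $k$.
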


\begin{proof}
We will couple our process with the random walk on integers we discussed above in the most natural way. If an agent starts at vertex $i \in [n]$ on the cycle, her avatar starts at integer $X_0 = i$ on $\Zz$. If $X_t$ increases, then the agent occupying vertex $i < n$ moves to $i+1$ and she moves to $0$ if she occupies vertex $n$. Similarly, if $X_t$ decreases, then the agent occupying vertex $i > 1$  moves to $i-1$ and she moves to $n$ if she occupies vertex $1$.

Concentrate on the initial green agent and an arbitrary white agent that are at distance $d$ from each other. Our goal is to control random variable $Y_t$, the ``distance'' between the corresponding avatars walking on $\Zz$. We initiate the auxiliary process with $Y_0 = d$ and for each $t \in \Nn$, $Y_{t} = Y_{t-1} + 2$ with probability $1/4$, $Y_{t} = Y_{t-1} - 2$ with probability $1/4$, and $Y_{t} = Y_{t-1}$ otherwise. (Hence, effectively, it is a lazy random walk.) The distance between the two avatars is $|Y_t|$.

Consider the first $t = n^2 \omega$ rounds. Our bound~(\ref{eq:random_walk}) applied with $a=2n+1$ implies that a.a.s.\ $|Y_t| \ge 2n+1$. 
But this implies that the two agents met at some point (when $|Y_t| = n$ or $|Y_t| = n+1$) and then met again (when $|Y_t| = 2n$ or $|Y_t| = 2n+1$), after making in the meantime everyone else green.
\end{proof}

\noindent \textbf{Adjustment to paths}: This is the only situation when the argument for cycles \emph{cannot} be easily adjusted to deal with paths. We provide an independent, direct argument if one does not want to use the coupling between the two families of graphs.

\begin{theorem}
Let $\omega=\omega(n)$ be any function that tends to infinity as $n \to \infty$. 
For any $k = k(n) \ge 2$, a.a.s.\
$
\xi(P_n, k) \le n^2 \omega.
$ 
\end{theorem}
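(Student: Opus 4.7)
The plan is to follow the template of Theorem~\ref{thm:small_k_upperbound}, but to replace the coupling with an auxiliary walk on $\Zz$ appropriate for the path. Introduce the folding map $\pi: \Zz \to [n]$, periodic of period $2(n-1)$, that realizes $P_n$ as the quotient of $\Zz$ under the reflection group generated by reflections at $1$ and $n$. Each path walker lifts to a simple random walk on $\Zz$: at an interior vertex the lifted step is forced by the agent's random step, while at a path endpoint (where the path step is deterministic) we toss an independent fair coin to choose between the two possible inward lifts. The resulting $k$ lifted walks are independent simple random walks on $\Zz$ whose projections under $\pi$ reproduce the original walks on $P_n$.

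Fix the initial green agent $G$ and an arbitrary white agent $W$, with lifts $\tilde X_t^G, \tilde X_t^W$, and set $Y_t := \tilde X_t^G - \tilde X_t^W$. This is a lazy random walk on $\Zz$ with step distribution $\{-2, 0, +2\}$ at probabilities $\{1/4, 1/2, 1/4\}$, started at some $Y_0$ with $|Y_0| \le n-1$. A meeting of $G$ and $W$ on the path---either a vertex meeting, or a swap across an edge (which handles the bipartite-parity issue flagged in Section~\ref{sec:proglem})---occurs precisely when $Y_s$ hits or straddles a multiple of the period $2(n-1)$. Hence, once $|Y_t| \ge 2n+1$, the walk $Y$ must have crossed one of $0, \pm 2(n-1)$ during the first $t$ rounds, so $G$ and $W$ have already met.

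Applying~(\ref{eq:random_walk}) with $a = 2n+1$ and $t = n^2 \omega$ yields
$$
\Prob(|Y_t| < 2n+1) \le \frac{4(2n+1)}{\sqrt{n^2 \omega}} = O\left(\frac{1}{\sqrt{\omega}}\right) = o(1),
$$
so a.a.s.\ $G$ meets any fixed $W$ within $n^2 \omega$ rounds. As in the cycle argument, the large displacement of $Y$ also forces $G$ to infect the remaining whites along the way, giving $\xi(P_n,k) \le n^2 \omega$ a.a.s.

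The main obstacle is to make the ``crossing implies meeting'' claim rigorous when $Y_0$ is odd: in that case $Y_s$ never exactly equals a multiple of $2(n-1)$, but still straddles one via a $\pm 2$ step, and one must verify that such a straddle on the lifted walks corresponds to two agents swapping across a path edge. This requires carefully unpacking $\pi$ near its reflection points---a technicality that did not arise for the cleaner cyclic quotient on $C_n$.
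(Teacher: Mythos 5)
Your lifting construction is sound: folding $\Zz$ onto $P_n$ with period $2(n-1)$, using an independent coin to resolve the lift at the reflecting endpoints, does produce independent simple random walks on $\Zz$ projecting to the path walks, and a crossing of a multiple of $2(n-1)$ by the difference walk (whether an exact hit or a $\pm 2$ straddle) does correspond to a vertex meeting or an edge swap. This is a genuinely different route from the paper, which never analyzes a pair: it tracks only the green agent, shows via \eqref{eq:random_walk} and then \eqref{eq:hitting_time} that she reaches one endpoint and then the other within $n^2\omega$ rounds, and concludes deterministically that an end-to-end traversal of the path forces her to cross every other agent. That argument needs no union bound and no second agent at all.

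The gap in your version is in the last step. You imported the threshold $a=2n+1$ from the cycle proof, but that threshold is calibrated to the cycle's period $n$: there, $|Y_0|\le n-1$ and $|Y_t|\ge 2n+1$ force $Y$ to cross \emph{two} consecutive multiples of $n$, and it is the full-period sweep of $Y$ between those two meetings that infects every third agent (for any $Z$, the ranges swept by $\tilde X^G-\tilde X^Z$ and $\tilde X^W-\tilde X^Z$ between the two meetings jointly cover an interval of length equal to the period, so one of them crosses a multiple of it). On the folded path the period is $2(n-1)$, and since $|Y_0|\le n-1 < 2(n-1)$ while $2n+1 < 4(n-1)$, the event $|Y_t|\ge 2n+1$ guarantees only \emph{one} crossing, hence only one meeting of the single pair $(G,W)$ and no sweep. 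As written, the claim that ``the large displacement of $Y$ also forces $G$ to infect the remaining whites'' is therefore unsupported; and the fallback of union-bounding over the $k-1$ white agents fails for large $k$, since each pair's failure probability is only $O(1/\sqrt{\omega})$, not $o(1/k)$, while the theorem is claimed for all $k\ge 2$. The repair is easy: apply \eqref{eq:random_walk} with $a=4n-3$ (say), which still gives failure probability $O(n/\sqrt{n^2\omega})=o(1)$, guarantees crossings of two consecutive multiples of $2(n-1)$, and then the cycle's ``everyone is infected between the two meetings'' argument transfers to the folded path (the reflection identifications only create extra meetings, which can only help).
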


\begin{proof}
We will estimate the number of rounds needed for the initial green agent to travel to one of the endpoints of the path and then to walk to the other endpoint. We will show that a.a.s.\ it happens in at most $n^2 \omega$ rounds. This will finish the proof as it guarantees that all other agents have to meet her at some point and so all of them eventually become green. 

We will couple our process with the random walk on integers we discussed above in the most natural way. If $X_t$ increases and the agent occupies vertex $i < n$, she moves to $i+1$. Similarly, if $X_t$ decreases and the agent occupies vertex $i > 1$, she moves to $i-1$. However, if she occupies one of the endpoints of the path (vertex $1$ or vertex $n$), her move is deterministic as she is forced to stay on the path, regardless of what the random walk does. 

Consider the first $t = n^2 \omega / 2$ rounds. Our bound~(\ref{eq:random_walk}) applied with $a=n$ implies that a.a.s.\ $|X_t| \ge n$. This implies that the agent must bump into one of the endpoints (say, vertex $1$) during this time period (say, at time $T \le t$). Let us now concentrate on the next $t = n^2 \omega / 2$ rounds following time $T$ and let us restart the coupled random walk by fixing $X_T=1$. Using~(\ref{eq:hitting_time}) applied with $a=n$ we conclude that a.a.s.\ the random walk hits integer $n$ during that period of time, and so the agent has to visit vertex $n$ as well. This concludes the proof.
\end{proof}

\medskip

Let us now turn our attention to a lower bound for $\xi(P_n,k)$. 

\begin{theorem}\label{thm:upper_bound_small_k}
Let $\omega=\omega(n)$ be any function that tends to infinity as $n \to \infty$. 
For any $k = k(n) \le \omega \ln n$, a.a.s.\
$
\xi(P_n, k) \ge n^2 / (\omega \, k^ 2 \ln k ).
$ 
\end{theorem}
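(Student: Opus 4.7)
The plan is to isolate the leftmost agent from the rest, thereby preventing the message from ever crossing a ``gap'' in the initial configuration. Set $t := n^2/(\omega k^2 \ln k)$ and choose the threshold $\Delta := n/(k \omega^{1/4})$. Let $Y_{(1)} < Y_{(2)} < \cdots < Y_{(k)}$ be the sorted initial positions. I will show a.a.s.\ that (E1) $Y_{(2)} - Y_{(1)} > \Delta$, and (E2) every agent's displacement stays below $\Delta/2$ throughout the first $t$ rounds. Conditional on (E1) and (E2), the agent starting at $Y_{(1)}$ remains in $[Y_{(1)} - \Delta/2, Y_{(1)} + \Delta/2]$ while every other agent remains in $[Y_{(2)} - \Delta/2, n]$; these intervals are disjoint, which rules out both vertex meetings and the edge-swap rule (the latter would force both agents into a consecutive pair of vertices straddling the gap). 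Hence the leftmost agent never meets any other agent by round $t$, so she and the rest are in distinct components of the meeting graph, and at time $t$ either she or all the others are still white, giving $\xi > t$.

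For (E1), I would treat the initial positions as continuous uniform on $[0, n]$ (the discrete correction is $O(1/n)$). The $k+1$ spacings of $k$ uniform points are exchangeable and each is marginally $\mathrm{Beta}(1, k)$ scaled by $n$, so $\Prob(Y_{(2)} - Y_{(1)} \le \Delta) = 1 - (1 - \Delta/n)^k \le k\Delta/n = \omega^{-1/4} = o(1)$.

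For (E2), I would couple the random walk on $P_n$ with a simple random walk on $\Zz$ via the $1$-Lipschitz ``zigzag'' reflection map $\phi: \Zz \to [1, n]$ of period $2(n-1)$, so that the $P_n$-displacement is pointwise dominated by the $\Zz$-displacement. Applying the Hoeffding-Azuma inequality~(\ref{eq:h-a}) to the $\Zz$-walk yields
$$
\Prob\bigl( \exists s \le t : |X_s - X_0| \ge \Delta/2 \bigr) \le 2 \exp\bigl( - \Delta^2/(8t) \bigr) = 2 k^{-\omega^{1/2}/8}
$$
per agent, using the identity $\Delta^2/(8t) = \omega^{1/2} \ln k / 8$. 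A union bound over the $k$ agents gives $2 k^{1 - \omega^{1/2}/8}$, which tends to $0$ as $\omega \to \infty$ uniformly for all $k \ge 2$ (for $k = 2$ the $\ln k$ factor is a constant but $\omega^{1/2}/8 \to \infty$ still drives the bound, and for $k \to \infty$ the exponent becomes negative).

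The main obstacle I anticipate is bookkeeping around the parameter choice $\Delta$: it must be small enough that the gap $Y_{(2)} - Y_{(1)} > \Delta$ exists a.a.s., yet large enough that no agent can cross it in time $t$. The choice $\Delta = n/(k\omega^{1/4})$ balances the two events, and the argument is uniform over the entire range $2 \le k \le \omega \ln n$. A minor technicality is that the walk on $P_n$ itself is not a martingale at the endpoints, so the $1$-Lipschitz coupling is needed to transfer Hoeffding-Azuma from $\Zz$ to $P_n$.
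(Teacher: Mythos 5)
Your proposal is correct and follows essentially the same strategy as the paper's proof: exhibit an initial gap of width $\Theta(n/(k\,\omega^{c}))$ that some agent must half-cross before the message can reach everyone, and rule this out within $t = n^2/(\omega k^2\ln k)$ rounds via the Hoeffding--Azuma inequality and a union bound over the $k$ agents. The only differences are cosmetic: you isolate the leftmost agent via the spacing $Y_{(2)}-Y_{(1)}$, whereas the paper isolates the initial green agent from all white agents, and your folding-map coupling treats the non-martingale behaviour at the path's endpoints slightly more explicitly than the paper does.
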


\begin{proof}
Note that with probability $1-O(1/\omega^{1/3}) \sim 1$, the initial green agent starts the process at distance at least $n/\omega^{1/3} = o(n)$ from both endpoints. Similarly, with probability $1-O(1/(k \omega^{1/3}))$, a given white agent starts the process at distance at least $n/(k \omega^{1/3}) = o(n/k)$ from the green agent. Hence, all white agents are at distance at least $n/(k \omega^{1/3})$ from the green agent with probability 
$$
(1-O(1/(k\omega^{1/3})))^{k-1} = 1-O(k/(k\omega^{1/3})) \sim 1. 
$$
Since we aim for a conclusion that holds a.a.s., we may assume that this property is satisfied at the end of round 0. 

Trivially, at the end of the whole process (that is, when all agents become green), at least one agent (either the one that was initially green or one of the white ones) has to move at least $n/(2 k \omega^{1/3})$ away from her initial position; otherwise, no white agent turns green. We will show that this is highly unlikely after only $n^2 / (\omega \, k^2 \ln k)$ rounds. Applying the Hoeffding-Azuma inequality~(\ref{eq:h-a}) with $b=n/(2k\omega^{1/3})$, $c_a=1$, and $t=n^2 / (\omega \, k^2 \ln k)$ implies that a given agent moves that far with probability at most
$$
2 \exp \left( - \frac {b^2}{2t} \right) = 2 \exp \left( - \frac {\omega^{1/3}}{8} \ln k \right) = o(1/k).
$$
Hence, the probability that at least one agent moves far is $O(k) \cdot o(1/k) = o(1)$, and the proof is finished.
\end{proof}

\noindent \textbf{Adjustment to cycles}:  The argument is easily adjusted for cycles. In fact, it is slightly simpler as one does not need to pay attention to the two endpoints of the path.

\subsection{Medium $k$}\label{sec:medium_values_of_k}

Let $\omega=\omega(n)$ be any function that tends to infinity as $n \to \infty$. In order to prove upper bounds in parts~(c) and~(d) of Theorem~\ref{thm:main}, we need to concentrate on $k = k(n) \gg \ln n$ and $k = k(n) < 50 n \ln n$. 

\medskip

Let us start with the following definition. We partition the set of vertices of the path $P_n$ into $b = b(n) := \lfloor k / (500 \ln n) \rfloor$ \emph{blocks}. (Note that $b \gg 1$ and $b < n/10$.) Each block consists of either $\lfloor n/b \rfloor \ge \lfloor 500 n \ln n / k \rfloor \ge 10$ or $\lceil n/b \rceil$ vertices. 

We will first adjust the proof of Lemma~\ref{lem:many_agents} to show that there are plenty of agents on each block at any round of the process, provided that it ends in at most $n^2$ rounds. The adjustment is easy and straightforward but we provide the proof for completeness. Moreover, since Lemma~\ref{lem:symmetry} holds for both paths and cycles, the lemma below holds for both families of graphs too.

\begin{lemma}\label{lem:many_agents2}
Consider the process on a path $P_n$ with $k$ agents such that $\ln n \ll k = k(n) < 50 n \ln n $. Then, a.a.s.\ the following holds: for any $t \in [n^2] \cup \{0\}$ and any $j \in [b]$, the number of agents occupying block $j$ at round $t$ is at least $100 \ln n$ and at most $1800 \ln n$. 
\end{lemma}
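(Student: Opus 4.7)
The plan is to mirror the proof of Lemma~\ref{lem:many_agents} closely, replacing a single vertex by a block of $\Theta(n/b)$ consecutive vertices. Fix $t \in [n^2] \cup \{0\}$ and $j \in [b]$. Since the $k$ agents choose their starting vertices uniformly and independently and then perform independent random walks, the number $N_{t,j}$ of agents inside block $j$ at round $t$ is distributed as $\Bin(k, p_{t,j})$, where $p_{t,j}$ is the probability that a single agent lies in block $j$ at round $t$. Once $p_{t,j}$ is pinned down within a narrow range, each tail of $N_{t,j}$ will be handled by Chernoff~(\ref{eq:chern}), and a union bound over the $(n^2+1)\cdot b = O(n^3)$ pairs $(t, j)$ will complete the argument.

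Conditioning on the starting vertex exactly as in Lemma~\ref{lem:many_agents} and then applying Lemma~\ref{lem:symmetry} term by term yields
$$
\frac{|\text{block } j|}{2n} \ \le \ p_{t,j} \ = \ \frac{1}{n} \sum_{v \in \text{block } j} \sum_{i=1}^{n} P_t(i,v) \ \le \ \frac{2\,|\text{block } j|}{n}.
$$
Because each block has either $\lfloor n/b \rfloor$ or $\lceil n/b \rceil$ vertices, with $b = \lfloor k/(500 \ln n)\rfloor$, the assumptions $\ln n \ll k \le 50 n \ln n$ imply $k/b = (1+o(1))\cdot 500 \ln n$ and $k/n \le 50 \ln n$. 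Hence, for $n$ large enough, the mean $\mu := k\, p_{t,j}$ lies in the interval $[225 \ln n,\, 1100 \ln n]$.

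Next I would apply Chernoff~(\ref{eq:chern}) to the two tails separately, using stochastic monotonicity of $\Bin(k, \cdot)$ to reduce to the extreme cases. To bound $\Prob(N_{t,j} < 100 \ln n)$, compare with a binomial of mean $\mu_{\min} \approx 225 \ln n$; this is a deviation with $\eps = 1 - 100/225 = 5/9$, yielding a tail at most $2\exp(-\eps^2 \mu_{\min}/3) \le 2 n^{-23}$. To bound $\Prob(N_{t,j} > 1800 \ln n)$, compare with a binomial of mean $\mu_{\max} \approx 1100 \ln n$; this gives $\eps = 1800/1100 - 1 = 7/11$ and a tail at most $2 n^{-148}$. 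Both $\eps$ lie safely in $(0, 3/2)$. A union bound over the $O(n^3)$ pairs $(t,j)$ then produces total failure probability $o(1)$, as required. The same argument carries over to $C_n$ since Lemma~\ref{lem:symmetry} holds on cycles (in fact with equality). The only delicate point is the constant-tracking: the specific values $500$, $100$, and $1800$ in the statement are engineered precisely so that both Chernoff deviation parameters stay strictly below $3/2$ while each tail is still $o(n^{-3})$, allowing the union bound to close.
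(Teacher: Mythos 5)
Your proposal is correct and follows essentially the same route as the paper: bound the single-agent block-occupation probability via Lemma~\ref{lem:symmetry} (each vertex occupied with probability between $1/(2n)$ and $2/n$), multiply by the block length $\lfloor n/b\rfloor$ or $\lceil n/b\rceil$ to get a binomial mean of order $\Theta(\ln n)$, apply Chernoff to each tail, and union-bound over the $O(n^3)$ pairs $(t,j)$. The only cosmetic difference is that the paper dominates the count by fixed binomials with means $225\ln n$ and $1200\ln n$ and uses $\eps=1/2$, while you work with the exact binomial $\Bin(k,p_{t,j})$ plus monotonicity and slightly different $\eps$; your constants and exponents check out.
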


\begin{proof}
The number of agents occupying block $j$ at round $t$ can be stochastically lower bounded by random variable $X \sim \Bin( k, (450 n \ln n / k) /(2n))$; recall that each block has length at least $\lfloor 500 n \ln n / k \rfloor \ge (9/10) (500 n \ln n / k) = 450 n \ln n /k$ and, by Lemma~\ref{lem:symmetry}, each vertex is occupied by a given agent with probability at least $1/(2n)$. Note that $\E[X] = 225 \ln n$ and so it follows from the Chernoff inequality~(\ref{eq:chern}) applied with $\eps = 1/2$ that
\begin{eqnarray*}
\Prob \big( X \le 100 \ln n \big) &\le& \Prob \left( |X-\E[X]| \ge \frac 12 \ \E[X] \right) \\
&\le& 2 \exp \left( - \frac {1}{12} \ \E[X] \right) \le 2 n^{-225/12+o(1)} = o(n^{-3}).
\end{eqnarray*}
Since there are $n^2+1$ choices for $t$ and $b = \lfloor k / (500 \ln n) \rfloor < n/10$ choices for $j$, the desired property fails for some pair of $t$ and $j$ with probability at most 
$$
(n^2+1)(n/10) \cdot o(n^{-3}) = o(1). 
$$
It follows that the desired lower bound for the number of agents holds a.a.s.

Similarly, the number of agents occupying block $j$ at round $t$ can be stochastically upper bounded by random variable $Y \sim \Bin( k, (600 n \ln n / k)(2/n))$; recall that each block has length at most $\lceil n/b \rceil \le \lceil 501 n \ln n / k \rceil \le (11/10) (501 n \ln n / k) \le 600 n \ln n /k$ and, by Lemma~\ref{lem:symmetry}, each vertex is occupied by a given agent with probability at most $2/n$. We get that $\E[Y]=1200 \ln n$ and $Y \ge (3/2) \E[Y] = 1800 \ln n$ with probability $o(n^{-3})$. The desired upper bound holds a.a.s.\ too and the proof is finished.
\end{proof}

\noindent \textbf{Adjustment to cycles}:  As mentioned above, the above lemma holds for cycles and the proof is exactly the same since Lemma~\ref{lem:symmetry} holds for both paths and cycles.

\medskip

We are now ready to prove an upper bound. Note that the coupling (Lemma~\ref{lem:coupling}) cannot be applied to the whole range of parameter $k$. Since the arguments used to deal with cycles and paths are the same but some technicalities are slightly more involved for paths, we decided to present an argument for paths instead of using the coupling.

\begin{theorem}\label{thm:medium_k_upperbound}
Let $\omega=\omega(n)$ be any function that tends to infinity as $n \to \infty$. 
For any $k$ such that $\omega \ln n \le k = k(n) < 50 n \ln n $, a.a.s.\
$
\xi(P_n, k) = O( n^2 \ln n / k ).
$ 
\end{theorem}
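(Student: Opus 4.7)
The plan is a block-level adaptation of the argument used for large $k$ (Theorem~\ref{thm:large_k}). I partition $V(P_n)$ into $b := \lfloor k/(500 \ln n) \rfloor$ consecutive blocks of length $L = \Theta(n \ln n /k)$ and condition on the a.a.s.\ event from Lemma~\ref{lem:many_agents2} that every block always contains $\Theta(\ln n)$ agents. Time is chopped into super-rounds of length $T := CL^2$ for a sufficiently large absolute constant $C$. Since $bT = \Theta(n^2 \ln n / k)$, the target bound follows if $O(b)$ super-rounds suffice for the green frontier to sweep across the entire path.

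The heart of the argument is a high-probability frontier-advance claim. Let $M(s)$ denote the rightmost vertex ever occupied by a green agent by the end of super-round $s$ (the leftward case is symmetric). Two ingredients combine to give $M(s+1) \ge M(s) + L$ with probability $\ge 1 - n^{-3}$ once the process has been running long enough for $\Omega(\ln n)$ green agents to accumulate in the block containing $M(s)$. \emph{(i) Accumulation:} whenever a green agent spends a super-round inside a block $B_j$, she meets each of the $\Theta(\ln n)$ other agents in $B_j$ with probability close to $1$ for large $C$, since the meeting time of two random walkers inside an interval of length $L$ is $\Theta(L^2)$; hence one or two super-rounds after the frontier reaches a new block, that block contains $\Omega(\ln n)$ green agents (by linearity of expectation plus Chernoff). \emph{(ii) Advance:} with $\Omega(\ln n)$ green agents sitting near the frontier, each one independently reaches the next block within a super-round with probability $q = q(C) \ge 1/2$ by the hitting-time bound $(\ref{eq:hitting_time})$ applied with $a = L$ and $t = CL^2$; the probability that none of them does is at most $(1-q)^{\Omega(\ln n)} = n^{-\Omega(1)}$, which for $C$ large enough is $\le n^{-3}$. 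A union bound over the $O(b)$ super-rounds and the two sides then shows that a.a.s.\ the frontier covers the whole path in $O(b)$ super-rounds, i.e., in $O(bL^2) = O(n^2 \ln n /k)$ rounds.

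I expect the main obstacle to be cleaning up two secondary issues. First, in the initial phase (before $\Omega(\ln n)$ green agents have accumulated) the per-super-round advance probability is merely a constant rather than $1 - n^{-3}$; however, each early super-round still feeds the accumulation step, so the regime switches to the high-probability one within $O(1)$ super-rounds, adding only $o(bL^2)$ overhead. Second, one needs a cleanup step to ensure every individual agent (not merely every block) turns green: since $\Omega(\ln n)$ green agents inhabit each block by the time the frontier finishes, the same meeting-time analysis combined with a union bound over the $k \le 50 n \ln n$ agents absorbs any stragglers in an additional $O(1)$ super-rounds, preserving the overall $O(n^2 \ln n /k)$ bound.
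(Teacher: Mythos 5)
Your skeleton matches the paper's: same block partition with $b=\lfloor k/(500\ln n)\rfloor$ blocks of length $L=\Theta(n\ln n/k)$, same use of Lemma~\ref{lem:many_agents2}, super-rounds of length $\Theta(L^2)$, and the target $b\cdot\Theta(L^2)=O(n^2\ln n/k)$. The gap is in your frontier-advance mechanism, specifically in step (i). You separate ``advance'' (one green agent hits the next block, moving $M(s)$) from ``accumulation'' (that agent then greens the $\Theta(\ln n)$ residents of the new block via pairwise meetings), and you need the accumulation to succeed so that step (ii) can deliver failure probability $n^{-3}$. But the accumulation as described does not have that quality: the events ``the lone green agent meets resident $w_1$'', ``meets $w_2$'', \dots all depend on the \emph{same} green trajectory, so ``linearity of expectation plus Chernoff'' is not valid (at best Markov on the number of missed residents gives success with probability $1-O(1/\sqrt{C})$, a constant). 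With only constant-probability accumulation per newly reached block, your claimed $1-n^{-3}$ per-super-round advance is not established, and the union bound over $O(b)$ super-rounds — where $b$ can be as large as $\Theta(n)$ when $k=\Theta(n\ln n)$ — breaks down; you would instead need a retry/negative-binomial argument over the $b$ blocks, which you do not supply. The same dependence issue undermines the final ``cleanup'' union bound over all $k$ agents.

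The paper avoids this by fusing the two steps into one sweeping mechanism that you never invoke: if a green agent travels from block $i$ all the way past block $i-1$ (into block $i-2$ or to an endpoint), then by the edge-crossing infection rule every agent occupying the swept block is greened in one shot. Each of the $\ge 100\ln n$ green agents in the frontier block performs such a sweep within one super-round with probability $\ge 1/4$ (via \eqref{eq:random_walk}), and these trials are genuinely independent across agents, so the frontier block's \emph{entire population} advances with failure probability $(3/4)^{100\ln n}=o(n^{-1})$ — no separate accumulation needed and the union bound over $b\le n$ blocks goes through. You also need the paper's bootstrap to create the first fully green block, which is where your ``$O(1)$ early super-rounds'' hand-wave is weakest: the paper first shows the initial green agent meets her nearest white neighbour (within $O(\omega^3 n^2/k^2)=o(n^2\ln n/k)$ rounds, using \eqref{eq:hitting_time} with $a=\Theta(\omega n/k)$), then lets the two green agents drift until they are separated by a full block while staying away from the endpoints, so that the crossing property greens everyone in the intermediate block. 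Incorporating the sweep-across-a-block mechanism and this two-stage bootstrap would repair your proof.
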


\begin{proof}
Since the argument is quite involved, let us first provide a high level overview of the proof. First, we will show that the initial green agent quickly meets a white agent. Then, we will track the distance between them and show that at some point they are far apart from each other so that they are separated by at least one block. As a result, not only are these two agents green but, in particular, all agents present on that internal block are green. The final step is to show that the sequence of blocks consisting of only green agents keeps expanding, eventually reaching both endpoints of the path.

Without loss of generality, we may assume that $\omega \le \ln \ln n$. Suppose that the initial green agent starts the process at vertex $j \in [n]$. Clearly, a.a.s.\ the initial green agent is at distance at least $n/\omega^{1/5}$ from both endpoints of the path, that is, $j > n/\omega^{1/5}$ and $i < n - n/\omega^{1/5}$. The probability that no white agent starts the process at distance at most $d:=\omega n / k$ from the initial green agent is at most 
$$
\left( 1 - \frac {d}{n} \right)^{k-1} \le \exp \left( - \frac {\omega (k-1)}{k} \right) \le \exp ( -\omega / 2) = o(1).
$$
Since we aim for the statement that holds a.a.s., we may assume that initially the green agent is at distance at least $n/\omega^{1/5}$ from both endpoints and one of the white agents starts at distance at most $d = \omega n / k = o(n/\omega^{1/5})$ from her. 

Let us focus on the initial green agent and a white agent that is initially the closest to her (if there are multiple white agents with this property, pick one of them arbitrarily). Provided that they did not yet meet, our process can be coupled with the lazy simple random walk on $\Zz$ starting at $X_0=\lceil d/2 \rceil$ in such a way that the distance at round $t$ between the two agent is at most $2X_t$. It follows from~\eqref{eq:hitting_time}, applied with $a=\lceil d/2 \rceil$ and $t = \omega d^2 = \omega^3 n^2 / k^2 \le n^2 / k = o(n^2 \ln n / k)$, that a.a.s.\ they meet at some round $T \le t$. On the other hand, the Hoeffding-Azuma inequality~\eqref{eq:h-a}, applied with $t = \omega d^2$ and $b = \sqrt{\omega  t} = \omega d = \omega^2 n / k \le n / (3\omega^{1/5})$, implies that a.a.s.\ both agents are at distance at most $n/(3\omega^{1/5})$ from their initial positions when they meet and so both of them are still at a distance of at least $n/(2\omega^{1/5})$ from both endpoints of the path. As before, since we aim for the statement that holds a.a.s., we may assume that this property holds.

We continue the process for an additional $t = \sqrt{\omega} (n \ln n / k)^2 \le n^2 / \omega^{3/2}$ rounds measured from round $T$ when they met. Similarly as before, provided that they did not reach the end of the path, the process can be coupled with the lazy simple random walk on $\Zz$ starting at $X_0=0$ in such a way that the distance at round $t$ between the two agent is at least $2X_t$. It follows from~\eqref{eq:hitting_time}, applied with $a=\lceil n/b \rceil = \Theta(n \ln n / k)$ and $t = \sqrt{\omega} (n \ln n / k)^2$, that a.a.s.\ some agent reaches the end of the path or the two agents are at distance more than $2a$ at some point of that period of time. On the other hand, after applying the Hoeffding-Azuma inequality~\eqref{eq:h-a}, with $t = \sqrt{\omega} (n \ln n / k)^2$ and $b = \sqrt{\omega  t} = \omega^{3/4} (n \ln n / k) \le n / \omega^{1/4} = o( n / \omega^{1/5} )$, we get that a.a.s.\ neither agent reaches the end of the path during that period of time. We conclude that a.a.s.\ the two agents are at distance more than $2a$ at some point, that is, at some point they occupy two different blocks of the path that are separated by at least one block. A trivial but important property is that all agents occupying that internal block are green. 

Suppose that at some point of the process, all agents occupying block $i$, $2 \le i \le b$, are green but some agent occupying block $i-1$ is white. Note that each block has length at most $\lceil n/b \rceil \le (11/10) (n/b) \le 600 n \ln n / k$. Let $t = (9600 n \ln n / k)^2$. Let us concentrate on any agent occupying block $i$. It follows from equation~(\ref{eq:random_walk}), applied with $a = \sqrt{t} / 8 = 1200 n \ln n / k$ and the above $t$, that with probability at least $1/2$ the agent either reaches the endpoint of the path (at some point during the following $t$ rounds) or is at distance at least $a$ from the original place after $t$ rounds. Hence, by symmetry, with probability at least $1/4$, this agent either at some point reaches the endpoint of the path or occupies block $j \le i-1$ after $t$ rounds. In both scenarios all agents occupying block $i-1$ become green. By Lemma~\ref{lem:many_agents2}, there are at least $100 \ln n$ agents occupying block $i$. Hence the probability that no agent does the job is at most $(3/4)^{100 \ln n} = o(n^{-1})$. By symmetry, the same argument can be applied when all agents occupying block $i$, $1 \le i \le b-1$, are green but some agent occupying block $i+1$ is white.

Since there are $b \le n$ blocks, by the union bound, we get that a.a.s.\ after any period of $t$ rounds, the number of blocks with all agents being green increases and so the process is done after at most
\begin{eqnarray*}
o(n^2 \ln n / k) + t \cdot b &=& o(n^2 \ln n / k) + (9600 n \ln n / k)^2 \cdot (k/(500 \ln n)) \\
&=& O(n^2 \ln n / k)
\end{eqnarray*}
rounds and so the proof is finished.
\end{proof}

\noindent \textbf{Adjustment to cycles}:  As mentioned earlier, the same argument works for cycles and the proof is simpler as one does not need to pay attention to the two endpoints of the path. The same comment applies to the next theorems below.

\medskip

Let us now move to a lower bound. 

\begin{theorem}\label{thm:medium_k_lowerbound}
Let $\omega=\omega(n)$ be any function that tends to infinity as $n \to \infty$. 
For any $k$ such that $\omega \ln n \le k = k(n) \le n / \ln^2 n$, a.a.s.\
$$
\xi(P_n, k) = \Omega \left( \frac {n^2}{k (\ln k) (\ln n)} \right) = \Omega \left( \frac {n^2}{k \ln^2 n} \right).
$$
Moreover, for any $k$ such that $n / \ln^2 n < k = k(n) < 50n \ln n$, a.a.s.\ 
$$
\xi(P_n, k) > (1/2+o(1)) \, n.
$$
\end{theorem}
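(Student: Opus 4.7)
The statement has two parts, handled by two different tools. For the main bound I plan a \emph{chain-of-meetings} argument paired with a simultaneous modulus-of-continuity estimate for the random walks; for the moreover bound I plan a deterministic \emph{wavefront-speed} argument.

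For the main bound, write $X_A(s)$ for the position of agent $A$ at round $s$, let $A_0$ be the initial green agent, and set $p_0 = X_{A_0}(0)$. An elementary order-statistic calculation shows a.a.s.\ both the leftmost and rightmost initial positions among the $k$ agents lie within $O((n/k) \ln \ln n) = o(n)$ of the endpoints of $P_n$, so a.a.s.\ some other agent $A^*$ satisfies $D^* := |X_{A^*}(0) - p_0| \ge n/2 - o(n)$. Applying the Hoeffding--Azuma inequality~(\ref{eq:h-a}) with $c_a = 1$, together with a union bound over all $k$ agents and all $O(T^2)$ pairs $0 \le s < s' \le T$, yields with probability $1 - o(1)$ the simultaneous estimate
\[
|X_A(s') - X_A(s)| \le C \sqrt{(s'-s)\ln(kT)}
\]
for every agent $A$ and every pair $s < s'$, where $C$ is an absolute constant chosen large enough to beat the $kT^2$ count.

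Now, if $A^*$ is green by round $T$, its infection history gives a chain $A_0 = B_0, B_1, \ldots, B_m = A^*$ of distinct agents ($m \le k - 1$) that meet at times $0 = s_0 < s_1 < \cdots < s_m \le T$. Vertex meetings give $X_{B_{i-1}}(s_i) = X_{B_i}(s_i)$, while each edge-crossing meeting contributes a harmless additive $\pm 1$. Telescoping and including the displacement of $A^*$ between $s_m$ and $T$ gives
\[
D^* \;\le\; M_{A^*} + \sum_{i=0}^{m-1} |X_{B_i}(s_{i+1}) - X_{B_i}(s_i)| + O(k),
\]
where $M_{A^*}$ is $A^*$'s maximum displacement. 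Bounding each summand via the modulus estimate and applying Cauchy--Schwarz $\sum_i \sqrt{s_{i+1} - s_i} \le \sqrt{m T} \le \sqrt{k T}$, the right-hand side is $O(\sqrt{kT \ln(kT)}) + O(k)$. Setting $T = n^2/(k \ln k \ln n)$ gives $\ln(kT) = O(\ln n)$ so the entire expression collapses to $O(n/\sqrt{\ln k}) + O(k) = o(n)$ (the $O(k)$ piece is $o(n)$ throughout $k \le n/\ln^2 n$). This contradicts $D^* \ge n/2 - o(n)$, so a.a.s.\ $\xi(P_n, k) > n^2/(k \ln k \ln n)$.

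For the moreover bound, define $R^*_t = \max\{X_B(s) : B \text{ is green at round } s \le t\}$. The move rules (including the edge-crossing convention) force $R^*_{t+1} \le R^*_t + 1$, because any green position at round $t+1$ is either the round-$(t+1)$ position of a previously-green agent (within $1$ of its round-$t$ position) or, in the edge-crossing case, adjacent to such a position. Hence $R^*_T \le p_0 + T$; symmetrically $L^*_T \ge p_0 - T$. Choosing $A^*$ as before so that $D^* \ge n/2 - o(n)$ a.a.s., and noting $A^*$ must occupy some green position at a round $s \le T$, Hoeffding--Azuma with a union bound over the $k \le 50 n \ln n$ agents shows $A^*$'s maximum displacement is $O(\sqrt{T \ln n})$ a.a.s., giving $T \ge D^* - O(\sqrt{T \ln n})$ and hence $T \ge (1/2 - o(1)) n$. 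The main obstacle throughout is the chain argument: one must ensure the simultaneous Azuma estimate is valid over the \emph{random} meeting times $s_i$, which forces the brute-force union bound over all $kT^2$ triples with a sufficiently large absolute constant $C$; the wavefront claim is easier but needs care when several (possibly edge-crossing) meetings happen in one round.
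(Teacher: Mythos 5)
Your argument is correct, but it takes a genuinely different route from the paper's. The paper's proof is local: it partitions $P_n$ into $b=\lfloor k/(500\ln n)\rfloor$ blocks, uses the \emph{upper} bound of Lemma~\ref{lem:many_agents2} (at most $1800\ln n$ agents per block) to exhibit a gap of length at least $n/(4k)$ between consecutive agents inside the block currently reached by a ``leading'' green agent, applies Hoeffding--Azuma to show that crossing half of that gap costs at least $n^2/(384k^2\ln k)$ rounds, and sums this cost over the $\sim b/2$ blocks separating the initial green agent from the far end of the path. Your proof is global: you trace the infection chain back from a far-away agent $A^*$, telescope its displacement along the chain, and control all increments simultaneously via a uniform modulus-of-continuity estimate plus Cauchy--Schwarz, getting $n/2-o(n)\le O(\sqrt{kT\ln(kT)})+O(k)=O(n/\sqrt{\ln k})+o(n)$ at $T=n^2/(k\ln k\ln n)$ --- the same order as the paper's $b\cdot(n/k)^2/\ln k$. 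Your version dispenses entirely with the block machinery and with Lemma~\ref{lem:many_agents2}, which is a genuine simplification; the paper's version localizes the cost per block, which is what lets the authors remark afterwards that a sharper union bound over nearby blocks would gain a further factor $\Theta(\ln k/\ln\ln n)$. Your Cauchy--Schwarz step is comparably lossy (tight only for a chain of $\sim k$ equally spaced meetings), so neither route closes the gap to the upper bound. Three small points to tighten: (i) an agent's position on $P_n$ is not a martingale at the reflecting endpoints, so~\eqref{eq:h-a} should be applied to the free walk on $\Zz$ and transferred through the $1$-Lipschitz folding map implicit in Lemma~\ref{lem:coupling} (the paper makes the same elision, so this is cosmetic); (ii) same-round cascades mean the meeting times $s_i$ need only be non-decreasing rather than strictly increasing, which your telescoping and Cauchy--Schwarz tolerate without change; (iii) in the ``moreover'' part the Azuma bound on $A^*$'s displacement must be run against a deterministic threshold $T_0=(1/2-\eps)n$ rather than the random time $\xi$ --- at which point your wavefront inequality $R^*_{t+1}\le R^*_t+1$ reduces to essentially the paper's own one-line observation that a green agent needs $\sim n/2$ deterministic steps to reach the far block.
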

 
\begin{proof}
Assume first that $\omega \ln n \le k = k(n) \le n / \ln^2 n$. Recall that there are $b=b(n) = \lfloor k/(500 \ln n) \rfloor$ blocks (because of our assumption that $b \gg 1$ and $b \ll n / \ln^3 n$), each of length $(500+o(1)) n \ln n / k$. We assign to each block a label from $[b]$; the first block contains vertex $1$ and the last block contains vertex $n$. By symmetry, we may assume that the initial green agent starts at block $i_0 \ge b/2$. We will say that a green agent is \emph{leading} if she occupies vertex $j$ and no other green agent occupies vertex $\ell < j$. Note that leading agents may (and often do) change during the process, and there could be more than one leading agent at a given round. We will concentrate on leading agents and investigate times $t_i$ when a leading agent leaves block $i+1$ and enters block $i$ for the first time. We will show that the following property holds a.a.s.: for all values of $i \in [i_0-2]$, 
\begin{equation}\label{prop_T}
t_{i}-t_{i+1} \ge T = T(n) := \frac {n^2} {384 \, k^2 \ln k}.
\end{equation}
Since $k = k(n) \le n / (\sqrt{\ln n} \, \omega)$, we get that $T \gg 1$ (this is the reason we had to introduce an upper bound for $k$; in fact, we assumed that $k \le n / \ln^2 n$ as for larger values of $k$ we will be able to prove a stronger bound anyway). This will yield a lower bound for $\xi(P_n, k)$ as it proves that it takes at least $(i_0-1) T \ge (b/2-1)T = \Omega( n^2 / (k \ln k \ln n))$ steps for the leading agent to reach the first block. Reaching the first block is needed as, by Lemma~\ref{lem:many_agents2}, a.a.s.\ there are agents in that block that by the definition of the leader are still white. 

Let us fix any $i \in [i_0-2]$ and investigate the situation at time $t_{i+1}$ when a leading agent enters block $i+1$. It follows from Lemma~\ref{lem:many_agents2} that there are at most $1800 \ln n$ agents occupying block $i+1$. Since each block has length $(500+o(1)) n \ln n / k$, there must be a gap between two agents occupying that block that is of length at least $g:=n/(4k)$. Trivially, between time $t_{i+1}$ and $t_i$ at least one agent has to move at least $g/2 = n/(8k)$ from her position at time $t_{i+1}$; otherwise, no agent crosses the middle vertex of the gap and so no green agent enters block $i$. The probability that at least one agent crosses the gap during $T$ rounds is, by Hoeffding-Azuma inequality~(\ref{eq:h-a}) applied with $b=g/2$ and $t=T$, at most
$$
k \cdot 2 \exp \left( - \frac {(g/2)^2}{2T} \right) = 2k \exp \left( - \frac { n^2/(64 k^2) }{ 2 n^2 / (384 k^2 \ln k) } \right) = 2 / k^2.
$$
Property~(\ref{prop_T}) holds for a given $i$ with probability $1-o(1/k)$. Since $i_0 \le b \le k$, by the union bound, it holds a.a.s.\ for all $i \in [i_0-2]$. The desired lower bound holds for this range of $k$. 

The argument for $n / \ln^2 n < k = k(n) < 50n \ln n$ is straightforward. As before, by symmetry we may assume that the initial green agent starts at block $i_0 \ge b/2$, that is, she is at distance at least $d = d(n) := n/2 - (500+o(1)) n \ln n / k \sim n/2$ from the first block. By Lemma~\ref{lem:many_agents2}, we may assume that the first block is always occupied by some agents. Trivially (and deterministically), it takes at least $d(n)$ steps for a leading agent to reach that block which yields the desired lower bound. The proof is finished. 
\end{proof}

In fact, in order to get a slightly stronger lower bound, one may use the fact that agents occupying blocks that are further away from the gap have to move more than $g/2$. Agents that are far can be dealt with easily. Hence, the union bound can be taken over $\Theta(\ln n)$ agents occupying close blocks instead of $k$ agents. This would improve the bound by a multiplicative factor of $\Theta( \ln k / \ln \ln n )$. However, since such a lower bound does not match the upper bound we proved above, we stayed with an easier proof of a slightly weaker bound. 

\section{Closing the Gap and the Meet-Exchange Process}

Many of the bounds in this paper are proved in a ``local'' fashion by proving concentration bounds for the ``moving parts'', whether these parts are trajectories of walks or numbers of agents in blocks. Once these bounds are established, one essentially needs to consider the worst case bound that holds with desired probability and treats the process as deterministic. This is a classic and natural approach. However, for this problem it will never be enough to establish tight bounds.

In order to obtain tight bounds, one needs to use a ``global'' approach. Such approach was successfully applied in~\cite{ref6} and then in~\cite{ref4} to analyze similar models. It is possible that it could be used again to get tight bounds for the broadcasting time for cycles and paths (possibly grids too). We leave it as an open problem.

\medskip

Finally, let us mention the \emph{Meet-Exchange} process that was recently introduced and studied in~\cite{ref3}. This process is closely related to our process but there are a few differences. In Meet-Exchange, agents are placed independently on the vertices of a graph according to the stationary distribution $\pi$ instead of selecting starting points uniformly at random. A message is left on one of the vertices and needs to be picked up the agents before they start passing it to each other. Moreover, agents perform a lazy random walk to avoid a problem of agents never meeting if the graph is bipartite (in the broadcasting time we avoid this issue by passing a message if agents go through the same edge but in the opposite directions---see below for more details). Despite these differences, it is quite possible that the bounds proved for the broadcasting time can also be proved for the Meet-Exchange process. Indeed, the stationary distribution is uniform on the cycle and almost uniform on the path. The message is picked by the agents quickly. In~\cite{ref11} it was shown that it takes $O((n/k)^2 \ln^2 k)$ rounds in expectation and so it is negligible unless $k$ is very small. Addressing the fact that agents in the Meet-Exchange perform a lazy random walk should also be possible but disregarding passing a message by crossing agents seems to be the most challenging task. Having said that, since the considered graphs are strongly recurrent, once agents meet they typically do so a few times.With more work one should be able to overcome these technicalities. We also leave it as an open problem.

\bigskip

We would like to thank anonymous reviewers for pointing these papers and the ``global'' approach to us and for many other valuable comments that substantially improved the quality of this paper.

\end{document}